\newtheorem{corollary}{Corollary}[section]
\newtheorem{proposition}{Proposition}[section]
\theoremstyle{definition}
\theoremstyle{definition}
\theoremstyle{definition}
\theoremstyle{definition}
\newtheorem{remark}{Remark}[section]
\renewcommand{\Im}{\operatorname{Im}}
\DeclareMathOperator{\Tr}{Tr}
\title{Generalized theta functions, projectively flat vector bundles and noncommutative tori}
\author{Maximiliano SANDOVAL${}^{\dagger}$ and Mauro SPERA${}^{\dagger\dagger}$\\
\phantom{void}\\
${}^{\dagger}$ Departamento de Mat\'ematica \\ 
Pontificia Universidad Cat\'olica de Chile\\ 
Campus San Joaqu\'\i n. 
Avenida Vicu\~na Mackenna 4860, Macul, Chile\\
\phantom{void}\\
 ${}^{\dagger\dagger}$ Dipartimento di Matematica e Fisica  ``Niccol\`o Tartaglia''\\
Universit\`a Cattolica del Sacro Cuore\\
Via della Garzetta 48, 25133 Brescia, Italia\\
\begin{small}
  \texttt{msandova@protonmail.com},
  \texttt{mauro.spera@unicatt.it}
\end{small}
}
\begin{document}

\maketitle
\begin{abstract}
 In this paper, the well-known relationship between theta functions and Heisenberg group actions thereon is resumed by combining complex algebraic and noncommutative geometric techniques in that we describe Hermitian-Einstein vector bundles on 2-tori via representations of noncommutative tori, thereby reconstructing Matsushima's setup~\cite{Matsushima}
and elucidating the ensuing Fourier-Mukai-Nahm (FMN) aspects. We prove the existence of noncommutative torus actions on the space of smooth sections of  Hermitian-Einstein vector bundles on 2-tori preserving the eigenspaces of a natural Laplace operator. Motivated by the Coherent State Transform approach to theta functions~(\cite{FMN, Tyurin}), we extend the latter to vector valued thetas and develop an additional algebraic reinterpretation of Matsushima's theory  making FMN-duality manifest again.
\end{abstract}
\medskip

\begin{footnotesize}
  \noindent
  \textbf{Keywords}: Hermitian-Einstein vector bundles; generalized theta functions; Heisenberg groups; Fourier-Mukai transform; noncommutative tori.\par

  \smallskip
  \noindent
  \textbf{MCS 2020}: 14 K 25; 32 L 05; 14 F 06; 58 B 34.
\end{footnotesize}

\section{Introduction}

In this paper we address, in the simplest case, the well-known intriguing and multifaceted relationship between theta functions and representations of Heisenberg groups (both infinite and finite~\cite{Matsushima,Mumford, Polishchuck}), from a blended complex differential geometric viewpoint -- focussed on holomorphic vector bundles on 2-tori -- and a noncommutative geometric one -- involving (rational) noncommutative 2-tori -- possibly bringing in some novel insights and, in particular, improving the treatment given in~\cite{Spera2015}. Noncommutative geometry arose with the aim of studying singular objects, such as orbit spaces, generally intractable via traditional topological, analytical and geometrical tools (see in particular the comprehensive \cite{ConnesNCG}), and it is ultimately based on the transition from points in a topological space to functions thereon and thence to general algebras. Noncommutative tori provide a simple yet highly non trivial testing ground for carrying out such a programme.  They appear naturally
in condensed matter physics issues, see e.g.~\cite{Bellissard-et-al, Denittis} and they also implicitly crop up in the theory of projectively flat vector bundles over tori, see e.g.~\cite{Matsushima,Kobayashi}; it is precisely this aspect that is dealt with in the present work.\par
Specifically, in Section 3, via a series of Propositions, we prove the existence of a representation of the noncommutative torus
${\mathcal A}_{1/\theta}$ ($\theta = q/r$, $q$ and $r$ being coprime positive integers)
on the space of sections $\Gamma({\mathcal E}_{r,q})$ of a projectively flat
\emph{Hermitian-Einstein} holomorphic vector bundle (or \emph{HE}-vector bundle
for short) ${\mathcal E}_{r,q}$ of rank $r$ and degree $q$ on a two-dimensional torus. This representation  will actually preserve the eigenspaces (``Landau levels'') of a natural Laplace operator (essentially, a quantum harmonic oscillator), hence, in particular, its holomorphic sections, thereby recovering the classical algebraic-geometric portrait.
The vector bundle ${\mathcal E}_{r,q}$ itself, in turn, can be manufactured from
a representation of ${\mathcal A}_{\theta}$ on its typical fibre. Another
representation of ${\mathcal A}_{\theta}$ on $\Gamma({\mathcal E}_{r,q})$, commuting with
the representation of ${\mathcal A}_{1/\theta}$, is produced out of the parallel transport pertaining to the Chern connection on ${\mathcal E}_{r,q}$.
The above developments bring in a vivid portrait of the Fourier-Mukai-Nahm (FMN,~\cite{Mukai,Nahm}) duality between ${\mathcal E}_{r,q}$ and ${\mathcal E}_{q,r}$ together with their respective Chern connections. Actually, all objects, representations and bundles, will come in (torus-) families (moduli).
In Section 4,  upon resorting to the well-known heat equation interpretation
of theta functions (described via the so-called \emph{Coherent State Transform (CST)} of~\cite{FMN}) and
further insisting on a noncommutative torus perspective, we present a ``matrix'' description of Matsushima's theory  making again the above duality manifest. Finally, we prove that, as pre-$C^*$-algebras (and for the unique $C^*$-tensor product involved), ${\mathcal A}_{q/r}\otimes{\mathcal A}_{r/q}$
and ${\mathcal A}_{1/rq}$ are isomorphic. This will be a byproduct of a ``categorical'' reinterpretation of Gauss sums identities also shedding light on Fourier-Mukai-Nahm transform issues. Moreover, a vector analogue of the $\emph{CST}$ will be set up. The layout of the paper
is completed by Section 2 -- gathering together background
material from different areas in order to fix notation and to pave the ground for
the successive developments in Sections 3 and 4 -- and by Section 5, pointing out possible applications and further research directions.

\section{Preliminary tools}

In this section we establish our notation and collect several miscellaneous technical
tools for the benefit of a wider readership.

\subsection{$k$-level theta functions and the Coherent State Transform}

We begin by providing minimal background on $k$-level theta functions and on their relationship with the \emph{heat equation} closely following the exposition of~\cite{FMN} (see also~\cite{Tyurin})---up to slight notational changes---and referring to it for a complete treatment. We restrict to the genus one case, namely to an Abelian torus $(M, \tau)$,
$\tau \in {\mathbb C}$, $\Im \tau > 0$.
\par
Let us start from the following (tempered) distributions on $S^1$
$$
\theta^0_{\ell}(x) = \sum_{n\in {\mathbb Z}} e^{2\pi i(\ell + kn)x}
$$
with $\ell = 0,1, \dots, k-1$. They are mapped, via the so-called \emph{Coherent State Transform (CST)}:
\[
  CST (\theta^0_{\ell})(z) = \vartheta_{\ell} (z, \tau) = \sum_{n\in {\mathbb Z}} e^{\pi i (\ell + kn) (\tau/k)  (\ell + kn)} e^{2\pi i(\ell + kn)z}
\]
to the $k$-\emph{level theta functions}. These, in turn, are interpreted as
holomorphic sections of the $k$-th power of the so called \emph{Theta line
  bundle}, and yield a basis thereof, as a consequence of the Riemann-Roch
theorem. A far reaching generalization for HE-vector
bundles has been developed by Matsushima~\cite{Matsushima} and his theory will
be retrieved and elaborated on in what follows.

\subsection{Review of Matsushima's theory}

In this subsection we outline Matsushima's theory~\cite{Matsushima}, tailoring the exposition to our purposes and referring to~\cite{Kobayashi}, especially Ch.IV-7 and to~\cite{Spera2015}, Section 3.2, for background material.
Here we just recall that an irreducible holomorphic vector bundle (i.e.\ without proper
holomorphic direct summands) admits a HE-metric if and only if it is stable in
the algebraic-geometric sense: this is the celebrated \emph{Kobayashi-Hitchin} correspondence, fully established for compact K\"ahler manifolds in~\cite{UY}. HE-vector bundles are poly-stable,
i.e. direct sums of stable bundles. In the present work we shall consider the special class of HE-bundles consisting of projectively flat holomorphic vector bundles on complex tori, which are equipped with a Hermitian metric whose corresponding canonical (Chern) connection has constant curvature. \par
\smallskip
Let $r$ and $q$ be coprime positive integers, i.e.\ $\gcd(r,q) = 1$.
Let $V$ be a one-dimensional complex vector space and let us consider a complex
torus $V/L$ where $L \cong {\mathbb Z}^2$ is a lattice. Let
$L^{\prime} \subset L$ be a  complete sublattice of $L$.
Specifically, if $L = \langle \omega_1, \omega_2\rangle$ is the lattice
generated by a (real) basis $\{\omega_j\}_{j=1,2}$ of $V$,
let $L^{\prime} = \langle r \omega_1, \omega_2\rangle$ and
$$
K := L/ L^{\prime} \cong {\mathbb Z}_r,
$$
thus we have an $r$-covering of complex tori
$$
\varphi: V/L^{\prime} \longrightarrow V/L .
$$
Let $A$ be the ${\mathbb Q}$-valued form defined by
 $A(\omega_1, \omega_2) = q/r$, and $A^{\prime} = rA$ the  ${\mathbb Z}$-valued form
 fulfilling $A^{\prime}(\omega_1, \omega_2) = q$ (the pull-back of $A$ via $\varphi$). The form $A$ gives rise to a HE-vector bundle
${\mathcal E}_{r,q} \to V/L$ ---of rank $r$ and degree $q$--- i.e.\ such that its canonical (Chern) connection has constant curvature

$\Omega = -2\pi i A \otimes {\rm Id}_{{\mathcal E}_{r,q}}$. Correspondingly, one has a HE-line bundle
${\mathcal E}_{1,q} \to V/L^{\prime}$, the $q$-level theta line bundle over $V/L^{\prime}$, related to the form $A^{\prime}$. Let us denote, as usual, by $H^0(X,E)$ the space of holomorphic sections of a holomorphic vector bundle $E \to X$, $X$ being the base manifold, with dimension $h^0(X,E)$. It is known (by Riemann-Roch and the vanishing of $H^{1}(X,E)$ for tori) that 
$$
h^0({V/L,\mathcal E}_{r,q}) = h^0(V/L^{\prime},{\mathcal E}_{1,q}) = q,
$$ 
thus the corresponding section spaces are (non-canonically) isomorphic. Now, given an orthonormal basis of  $H^0(V/L^{\prime},{\mathcal E}_{1,q})$ made up by $q$-level theta functions $\{ \vartheta_m\}_{m=0}^{q-1}$, one has, according to Matsushima, a splitting
of $\varphi^* {\mathcal E}_{r,q} \to V/L^{\prime}$ as
$$
\varphi^* {\mathcal E}_{r,q}  \cong \bigoplus_{\sigma \in K} {\left({\mathcal E}_{1,q}\right)}_\sigma
$$
where ${({\mathcal E}_{1,q})}_\sigma$ is a translate of ${\mathcal E}_{1,q}$ and any two different translates being non isomorphic as holomorphic line bundles. \par
Therefore, one has an injective correspondence 
$$
{\mathcal M}: H^0(V/L,{\mathcal E}_{r,q}) \rightarrow \,\oplus_{\sigma \in {\mathbb Z}_r}H^0(V/L^{\prime},{({\mathcal E}_{1,q})_\sigma}) \,
$$
given by (picking an orthonormal basis $\{s_m\}$, $m = 0,1,\dots,q-1$)
$$
{\mathcal M}: s_m \mapsto {\rm vec} (\vartheta_{m}) :=
\left[ (\sigma \cdot \vartheta_{m})_{\sigma \in {\mathbb Z }_r}\right]  
$$
where the map $\operatorname{vec}$ arranges the translates of $\vartheta_{m} \in H^0(V/L^{\prime},{\mathcal E}_{1,q})$ into a column vector. 

We do not spell out the action of $\sigma$ in the original Matsushima picture in detail, since we shall essentially recover it anew in what follows, see Section 4.2.\par

\subsection{The noncommutative torus}
General references for the present subsection are, among others,~\cite{ConnesNCG, Pedersen, BB}.
Recall that a $C^*$-algebra ${\mathcal A}$ is a Banach $*$-algebra whose norm $|| \cdot ||$ satisfies $|| a^* a || = || a ||^2$ for all $a \in {\mathcal A}$.
The algebra $C(X)$ of continuous functions on a locally compact topological space $X$ - vanishing at infinity if $X$ is not compact - is a commutative $C^*$-algebra, with the product given by the pointwise product of functions, the involution $*$ being complex conjugation and $|| \cdot ||$ the supremum norm.
If $X$ is not compact, $C(X)$ will not possess an identity, which can nevertheless be adjoined, this corresponding to the one-point compactification of $X$. Conversely, according to the Gel'fand-Naimark theorem, any commutative $C^*$-algebra ${\mathcal A}$ can be realized as $C(\sigma({\mathcal A}))$, with $\sigma({\mathcal A})$ being the spectrum of ${\mathcal A}$, i.e. the set of maximal ideals ${\mathcal I} \subset {\mathcal A}$, equipped with the weak $*$-topology.
In $C(X)$, a maximal ideal is of the form ${\mathcal I}_x = \{ f \in C(X) | \, f(x) = 0 \}$,  where $x \in X$. Therefore, the category of locally compact spaces and proper maps is dual to the category of commutative $C^*$-algebras and $*$-homomorphisms. In particular, a standard 2-torus ${\mathbb T}^2 :=\{ (z_1,z_2) \in {\mathbb C}^2 | \, \, |z_j| = 1, j=1,2 \}$ can be traded for $C({\mathbb T}^2)$, which is in turn generated, in view of Fourier theory, by the (unitary) multiplication operators $u_j := z_j \cdot$ acting on the Hilbert space $L^2({\mathbb T}^2, {\mathfrak m})$, with ${\mathfrak m}$ the Lebesgue measure. "Deformation" of the above algebra produces what is called a noncommutative torus. It is then possible to select the "smooth" part of it, akin to the smooth functions on a manifold. This is done immediately below.\par

Let $\theta \in \mathbb{R}$. The noncommutative torus is the pre-$C^{*}$-algebra
$\mathcal{A}_{\theta}$ consisting of rapidly decaying series
\[
  a = \sum_{n,m=-\infty}^{\infty} a_{nm}u^{n}v^{m}, \qquad a_{n,m} \in \mathbb{C}
\]
where $u,v$ are unitary operators in a Hilbert space ${\mathcal H}$ satisfying the relation
\begin{equation}
  \label{eq:nc-torus-relation}
  vu = e^{2\pi i \theta}uv.
\end{equation}
We have a natural smooth structure on $\mathcal{A}_{\theta}$ given by the
noncommutative integral
\[
  \tau(a) = a_{00}, \qquad a \in \mathcal{A}_{\theta},
\]
and noncommutative derivatives
\[
  \partial_{1} (u^{n}v^{m}) = i n u^{n} v^{m},\qquad  \partial_{2} (u^{n}v^{m}) = i m u^{n}v^{m}.
\]
In the sequel we shall take $\theta \in {\mathbb Q}$, $\theta > 0$ and, ultimately, we shall deal with $\theta = q/r$, with $q$ and $r$ positive and coprime. Also, we notationally distinguish $\mathcal{A}_{\theta}$ from its $C^*$-completion $A_{\theta}$.\par
We remark from the outset that finite dimensional irreducible unitary representations of  $\mathcal{A}_{\theta}$ exist, see e.g.~\cite{ConnesNCG, Spera2015}:  
indeed set $\nu := q/r$, with $q$ and $r$ relatively prime positive integers, and
take
$u = {\rm diag}(1, e(\nu ), e(2\cdot \nu),\dots e((r-1)\cdot \nu))$
and $v$ = matrix of the shift map $e_i \to e_{i-1}$, $i=1,2,\dots r$, $e_0 = e_r$, with $(e_1,\dots, e_r)$ being the canonical basis of ${\mathbb C}^r$
and where we defined, for real $x$, $e(x) := e^{2\pi i x}$.  Then (1) is satisfied; also notice that  $u^r = v^r = 1$ (unit matrix), which entails irreducibility.
 \par
Proposition 3.3 below will show that (up to phase factors and unitary equivalence) this is indeed the typical example. In Section 4 we shall present a distributional realization of the Matsushima spaces carrying explicit noncommutative tori representations akin to the one just discussed.

\subsection{The Canonical Commutation Relations and the quantum harmonic oscillator}

In this subsection we assemble basic information on the quantum harmonic
oscillator and its relationship to the Canonical Commutation Relations and the
associated Stone-von Neumann theorem~\cite{vonNeumann}, referring to the
comprehensive survey~\cite{Rosenberg} for elucidation of their modern
ramifications.\par
Let us consider a representation of the \emph{Canonical} (or Weyl-Heisenberg)
\textit{Commutation Relations} (CCR) on a (necessarily infinite dimensional)
separable Hilbert space ${\mathcal H}$,
$$
[Q, P] = i \pmb{1}
$$
(one degree of freedom), with $Q$ and $P$ (``position'' and ``momentum''
operators, respectively) unbounded self-adjoint operators on a suitable domain.
In order to avoid problems arising from the latter issue (see
however~\cite[Section X.6]{Reed-Simon}, for amplification and further use, together
with~\cite{Dixmier}) the CCR are reformulated (Weyl) in integral form:
$$
U(a)\,V(b) = e^{i a b} \,V(b)\,U(a), \qquad a, b \in {\mathbb R}
$$
with $P$ and $Q$ becoming the infinitesimal generators of the one parameter unitary groups
$U(\cdot)$ and $V(\cdot)$, respectively.\par

The quantum harmonic oscillator Hamiltonian reads
$$
H = \frac{1}{2} (P^2+Q^2) = A^{\dagger}A +  \frac{1}{2}\pmb{1} = \frac{1}{2} (A^{\dagger}A + AA^{\dagger})
$$
in terms of \emph{annihilation} and \emph{creation} operators
$$
A = \frac{1}{\sqrt{2}}(Q+iP) \qquad A^{\dagger} = \frac{1}{\sqrt{2}} (Q-iP)
$$
subject to the commutation relation
$$
AA^{\dagger} - A^{\dagger}A = \pmb{1}.
$$
In the irreducible case the spectrum of $H$ only consists of simple eigenvalues
$\{n+ \frac{1}{2} \}_{n=0}^{\infty}$ and the $n$-{th} eigenspace ${\mathcal H}_n$ is
generated by $\phi_n = (1/\sqrt{n!}) ({A^{\dagger}})^n\phi_0$, with the ground state $\phi_0$
fulfilling $A\phi_0 = 0$. The operator $A^{\dagger}A$, namely, the Hamiltonian without
constant term (``zero-point energy'') is called the {\it number operator}.\par
In general the multiplicity of a representation of the CCR (phrased into Weyl's
integral form) is given by $k= {\dim}\,{\mathcal H}_0$: this is a version of the
Stone-von Neumann uniqueness theorem (see e.g.\ \cite{vonNeumann}).

\subsection{Gauss sums}

First of all, let us recall the celebrated \emph{Gauss sums} (see~\cite[,
Section 5.6]{HW}):
$$
S(\mu,r) := \sum_{0 \leq \ell \leq r-1} e^{2\pi i \ell^{2} \,\frac{\mu}{r}},
$$
for integers $\mu$ and $r$, the latter different from zero,
together with the well-known multiplicative formula 
$$
S(\mu q,r) S(\mu r,q) = S(\mu,rq)
$$
valid for coprime integers $r$ and $q$ and any integer $\mu$ (cf.~\cite[Theorem
64]{HW}).

Here is a quick outline of the proof. The r.h.s.\ reads
$$
\sum_{k=0}^{rq-1} e^{2\pi i \frac{\mu}{rq} \,k^2}.
$$
Now, upon exploiting the group isomorphism
$$
{\mathbb Z}_r \times {\mathbb Z}_q \, \cong \, {\mathbb Z}_{r q}
$$
stemming from the equation
$$
q \cdot [\ell]_r  + r\cdot [m]_q = [k]_{rq}
$$
which, given a residue class $[k]$ modulo $rq$, yields unique residue classes $[\ell]$ modulo $r$
and $[m]$ modulo $q$ (the converse being clear), we see that,
setting $ k = \ell q + m r$ (no dependence on representatives), the r.h.s.\ splits into the product appearing in the l.h.s. Explicitly:
$$
\frac{k^2}{rq} = \frac{(\ell q + m r)^2}{rq} = \frac{\ell^2 q}{r} + \frac {m^2 r}{q} + 2\ell m   
$$
and the last term in the r.h.s.\ exponentiates to 1. \ Notice that the problem
of finding $[k]_{rq}$ such that $[k]_{r} = [\ell]_{r}$ and $[k]_{q} = [m]_{q}$,
with given classes $[\ell]_{r}$ and $[m]_{q}$ is solved via the Chinese Remainder
Theorem: if $a$ and $b$ are integers such that $a \,r + b \,q = 1$, then
$k = qb\ell + ram$, see again~\cite[Theorem 121]{HW}.

\section{Representations of noncommutative tori and HE-vector bundles}

In this Section we reinterpret the Matsushima construction of holomorphic
HE-vector bundles over a two-dimensional torus ${\mathbb C}/ \Lambda$
---with lattice $\Lambda = \langle 1, \tau\rangle$ and $\Im \tau > 0$--- via representations of the two-dimensional noncommutative torus, see also~\cite{Spera2015}. This will be unfolded through the following series of propositions.

\begin{proposition}\label{thm:correspondence-nctorus-bundles}
	Given $\theta \in {\mathbb Q}$, $\theta > 0$, an irreducible representation of $\mathcal{A}_{\theta}$ on a finite-dimensional Hilbert space
	$\mathcal{H}$ produces a HE-vector bundle $E_{\theta} \to {\mathbb C}/ \Lambda$
	over a two-dimensional torus
    $\mathbb{C}/ \Lambda$, where $\Lambda = \langle 1, \tau\rangle$, $\Im (\tau) > 0$ with degree $\theta \dim(\mathcal{H})$ and rank
	$\dim(\mathcal{H})$.
\end{proposition}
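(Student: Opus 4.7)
The plan is to realize $E_\theta$ as the quotient of the trivial bundle $\mathbb{C} \times \mathcal{H} \to \mathbb{C}$ by a cocycle action of $\Lambda = \mathbb{Z} + \mathbb{Z}\tau$ built out of the generators $u, v$ of the given representation. Since $\mathcal{H}$ is finite-dimensional and the representation is irreducible, taking determinants on both sides of $vu = e^{2\pi i\theta}uv$ forces $\theta$ to be rational, say $\theta = q/r$ with $\gcd(q,r)=1$, and the standard classification then gives $\dim \mathcal{H} = r$ with $u, v$ in the clock-shift presentation. I would then set
\[
J_1(z) := u, \qquad J_\tau(z) := e^{-2\pi i\theta z - \pi i\theta \tau}\, v,
\]
and verify the $1$-cocycle identity $J_1(z+\tau) J_\tau(z) = J_\tau(z+1) J_1(z)$ directly: swapping $vu \mapsto uv$ contributes the Weyl phase $e^{2\pi i\theta}$, and the scalar factor in $J_\tau$ contributes $e^{-2\pi i\theta}$ under $z \mapsto z+1$, so the two cancel. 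This produces a well-defined holomorphic rank-$r$ bundle $E_\theta \to \mathbb{C}/\Lambda$.

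For the Hermitian--Einstein structure, I would endow the trivial bundle with the metric
\[
h(z) := e^{-2\pi\theta y^2 / \Im \tau}\, \mathbf{1}_{\mathcal{H}}, \qquad y = \Im z.
\]
Unitarity of $u$ handles invariance under $z \mapsto z+1$; the quadratic exponent produces precisely the linear-plus-constant terms $4\pi\theta y + 2\pi\theta\, \Im\tau$ needed to absorb $\lvert e^{-2\pi i\theta z - \pi i\theta\tau}\rvert^2$ under $z \mapsto z+\tau$, so $h$ descends to $E_\theta$. A short computation of $\bar\partial \partial \log h$ then yields a curvature form of the shape $-2\pi i A \cdot \mathbf{1}_{\mathcal{H}}$, with $A$ the invariant $(1,1)$-form of Section~2.2, so the Chern-Bott connection is projectively flat with constant scalar curvature, i.e.\ $E_\theta$ is HE. Integrating $c_1 = \tfrac{i}{2\pi}\Tr F$ over $\mathbb{C}/\Lambda$ (of area $\Im\tau$) then gives $\deg E_\theta = \theta \cdot \dim \mathcal{H}$, with rank $\dim\mathcal{H}$ by construction.

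The main obstacle is not conceptual but normalizational: the scalar weight in $J_\tau$ and the quadratic exponent in $h$ must be calibrated so that the cocycle condition balances the Weyl phase \emph{exactly}, and so that the descended metric has strictly constant (not merely projectively constant) scalar curvature. All of the information about the polarizing form $A$ is packaged into these two exponents; once they are pinned down, the rest reduces to standard K\"ahler geometry on an elliptic curve, and stability of $E_\theta$ (if needed) follows from projective flatness via the Kobayashi--Hitchin correspondence recalled in Section~2.2.
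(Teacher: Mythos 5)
Your proposal follows essentially the same route as the paper: both realize $E_{\theta}$ as the quotient of $\mathbb{C}\times\mathcal{H}$ by a factor of automorphy built from the Weyl pair $(u,v)$, descend a Gaussian-type Hermitian metric, and read off constant curvature, degree and rank from the Chern--Bott connection. The only difference is a choice of gauge --- you use the classical holomorphic automorphy factor (trivial on $z\mapsto z+1$) with metric $e^{-2\pi\theta y^{2}/\Im\tau}$, whereas the paper uses the canonical factor $e^{\frac{\theta\pi}{\Im(\tau)}\left(z\overline{\gamma}+\frac{1}{2}\lvert\gamma\rvert^{2}\right)}e^{\pi i\theta nm}u^{-n}v^{-m}$ with metric $e^{-\frac{\theta\pi}{\Im(\tau)}\lvert z\rvert^{2}}$; the two cocycles differ by a coboundary and give isomorphic bundles, and your cocycle and descent computations check out.
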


\begin{proof}
	Let $u$, $v$ be unitary operators on a finite-dimensional Hilbert space $\mathcal{H}$
	satisfying~\eqref{eq:nc-torus-relation} (abuse of notation); then if $\gamma = n + \tau m \in \Lambda$ the function
  (theta multiplier)
  \begin{equation}
    \label{eq:theta-multiplier}
    J^{-1}_{\gamma}(z) = e^{\frac{\theta \pi}{\Im(\tau)} \left( z \overline{\gamma} + \frac{1}{2} \lvert \gamma \rvert^{2} \right)}e^{\pi i \theta nm} u^{-n} v^{-m},\qquad  z  = x + \tau y\in \mathbb{C}, \, \gamma \in \Lambda,
  \end{equation}
	satisfying
	$$	
	J^{-1}_{\gamma+\delta}(z) = J^{-1}_{\gamma}(z + \delta)J^{-1}_{\delta}(z),
$$
	defines a holomorphic vector bundle $E$  with typical fibre $\mathcal{H}$ over the torus $\mathbb{C} / \Lambda$ given as the quotient
\[
	(\mathbb{C} \times \mathcal{H}) / \sim
\]
where
\[
	(z + \gamma , {\bf v}) \sim (z, J^{-1}_{\gamma}(z) \,{\bf v}),
\]
${\bf v} \in \mathcal{H}$. Its (smooth) sections $s: \mathbb{C} \to \mathcal{H}$ (collectively denoted by $\Gamma(E)$) are then characterized by the following periodicity conditions:
	\begin{align}
		s(z+ 1)  = e^{\frac{\theta \pi}{\Im(\tau)} \left( z + \frac{1}{2}  \right)}u^{*}(s(z)),\qquad s(z+ \tau)  = e^{\frac{\theta \pi}{\Im(\tau)} \left( z\overline{\tau} + \frac{1}{2}\lvert \tau \rvert^{2}  \right)}v^{*}(s(z)).
	\end{align}
  This can be ascertained via the following computation. Write
  $$
  \tilde{s}(z) = s^j(z){\mathbf e}_j (z)
  $$
  with ${\mathbf e}_j (z)$, $j=1,2,\dots,\mathrm{dim}{\mathcal H}$ a smooth frame (Einstein's convention employed). Then
  $$
   \tilde{s}(z)  = \tilde{s}(z+\gamma) = s^j(z+\gamma){\mathbf e}_j (z+\gamma) = s^j(z+\gamma) [J^{-1}_{\gamma}(z)]^i_j {\mathbf e}_i (z) 
  $$
  whence
  $$
  s^i(z+\gamma) = [J_{\gamma}(z)]^i_j \,s^j(z),
  $$
yielding (3). \par
  On $\Gamma(E)$ we have a Hermitian structure $( \cdot |\cdot ) $ given by
	\[
		(s | s')(z) = \langle s(z), s'(z) \rangle_{\mathcal{H}} h(z), \qquad h(z) =e^{- \frac{\theta \pi}{\Im(\tau)} \lvert z \rvert^{2} }
	\]
	with Chern connection (the unique connection compatible with the Hermitian and the holomorphic structure)
	\[
		\nabla = \left( d  - \frac{\theta \pi}{\Im(\tau)} \bar{z} dz \right) \otimes \pmb{1}_{\mathcal{H}}
	\]
	having constant curvature and Chern class
    \[
      \frac{i}{2\pi}\nabla^{2} = \theta\, \omega \otimes \pmb{1}_{\mathcal{H}}, \quad c_{1}(E) = \theta \dim(\mathcal{H})\omega\,
    \]
	
	with 
	$$
	\omega = \frac{i}{2\Im(\tau)} dz\wedge d\bar{z}.
	$$
	
	Indeed, a short computation shows that, if $Q := i \nabla_{\frac{\partial}{\partial x}}$, $P :=	i \nabla_{\frac{\partial}{\partial x}}$, then
	\[
	 \frac{1}{2 \pi i}[Q, P] = \theta \,\pmb{1}_{\Gamma(E)}.
	 \]
	Moreover, it is clear that the rank of $E$ is $c_{0}(E) = \dim(\mathcal{H})$.
	
	This vector bundle will be our $E_\theta$.
\end{proof}

\begin{proposition}
	The correspondence that assigns to each representation $\pi$ of $\mathcal{A}_{\theta}$ 
	the holomorphic vector bundle ${\mathcal E}_{\pi}$ over the torus ${\mathbb C}/\Lambda$
	is functorial.
\end{proposition}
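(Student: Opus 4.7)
The plan is to describe the functor on morphisms and then verify the categorical axioms. The object assignment is already provided by \autoref{thm:correspondence-nctorus-bundles}: each representation $\pi: \mathcal{A}_{\theta} \to \End(\mathcal{H}_{\pi})$ yields the holomorphic vector bundle $\mathcal{E}_{\pi} = (\mathbb{C} \times \mathcal{H}_{\pi})/\sim_{\pi}$, with equivalence relation built from the theta multiplier \eqref{eq:theta-multiplier}. Given an intertwiner $T: \mathcal{H}_{\pi_{1}} \to \mathcal{H}_{\pi_{2}}$ between two such representations, I would set $\mathcal{E}_{T}: \mathcal{E}_{\pi_{1}} \to \mathcal{E}_{\pi_{2}}$ by the fibrewise rule $[z, w] \mapsto [z, T w]$.

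The central step is well-definedness. I would factor the multiplier \eqref{eq:theta-multiplier} as $J_{\gamma}(z) = c_{\gamma}(z)\, u^{-n} v^{-m}$, where the scalar
\[
  c_{\gamma}(z) = e^{\frac{\theta \pi}{\Im(\tau)}\left( z \overline{\gamma} + \tfrac{1}{2} |\gamma|^{2} \right)} e^{\pi i \theta nm}
\]
depends only on the torus data and on $\theta$, not on the representation. Since $T$ intertwines the $\mathcal{A}_{\theta}$-actions, $T \circ \pi_{1}(u^{-n} v^{-m}) = \pi_{2}(u^{-n} v^{-m}) \circ T$, whence $T \circ J_{\gamma}^{(1)}(z) = J_{\gamma}^{(2)}(z) \circ T$. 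Consequently, if $(z+\gamma, w') \sim_{\pi_{1}} (z, J_{\gamma}^{(1)}(z) w')$ then $(z+\gamma, Tw') \sim_{\pi_{2}} (z, J_{\gamma}^{(2)}(z) Tw')$, so $\mathcal{E}_{T}$ descends to a map of quotients.

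Holomorphicity and fibrewise linearity of $\mathcal{E}_{T}$ are immediate: in the chosen trivializations the map is the constant assignment $(z, w) \mapsto (z, Tw)$, so the Cauchy-Riemann equations hold trivially and each fibre map is literally $T$. The axioms $\mathcal{E}_{\mathrm{id}_{\pi}} = \mathrm{id}_{\mathcal{E}_{\pi}}$ and $\mathcal{E}_{S \circ T} = \mathcal{E}_{S} \circ \mathcal{E}_{T}$ then follow directly from this fibrewise definition.

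I do not anticipate a serious obstacle: the argument turns entirely on the observation that the non-scalar, representation-dependent part of the cocycle $J_{\gamma}(z)$ is precisely the operator $u^{-n} v^{-m}$, which is exactly the structure that an intertwiner is required to respect. As an addendum, when $T$ is unitary one checks that $\mathcal{E}_{T}$ is a unitary bundle isomorphism and, because the Chern-Bott connection in \autoref{thm:correspondence-nctorus-bundles} is of the form $d \otimes \pmb{1}_{\mathcal{H}} - (\theta\pi/\Im(\tau))\bar{z}\,dz \otimes \pmb{1}_{\mathcal{H}}$ with scalar connection form, it is preserved by $\mathcal{E}_{T}$; thus the correspondence in fact refines to a functor into HE-bundles equipped with their canonical connections.
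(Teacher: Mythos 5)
Your proof is correct and rests on exactly the same key observation as the paper's: an intertwiner commutes with the representation-dependent part $u^{-n}v^{-m}$ of the multiplier $J_\gamma(z)$, while the scalar factor is representation-independent, so the fibrewise map descends. The paper phrases this equivalently at the level of sections (defining $(\psi_U(s))(z) = U(s(z))$ and checking the boundary conditions) rather than on the quotient total space, and restricts to unitary intertwiners; your explicit verification of the functor axioms and of compatibility with the Chern--Bott connection is a harmless elaboration.
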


\begin{proof}
	
	Let $\pi: \mathcal{A}_{\theta} \to \mathcal{B} (\mathcal{H}_{\pi})$ and
	$\sigma: \mathcal{A}_{\theta} \to \mathcal{B}(\mathcal{H}_{\sigma})$ be two such
	representations and let $T : \mathcal{H}_{\pi} \to \mathcal{H}_{\sigma}$ be an
	intertwining unitary map. Then the map on sections
	\[
		\psi_{T}: \Gamma (\mathcal{E}_{\pi}) \to \Gamma (\mathcal{E}_{\sigma})
	\]
	given by
	\[
		(\psi_{T}(s) )(z) = T (s (z))
	\]
	is an isomorphism of $C^{\infty}(\mathbb{T}^2)$-modules. The above map
	is indeed well defined, i.e. it maps sections to sections:
	\begin{align*}
		(\psi_{T}(s) )(z + n + \tau n) & = e^{\frac{\theta \pi}{\Im(\tau)} \left( z \overline{\gamma} + \frac{1}{2} \lvert \gamma \rvert^{2} \right)}e^{\pi i \theta nm} T \pi(u)^{-n} \pi(v)^{-m} s(z)                  \\
		                               & = e^{\frac{\theta \pi}{\Im(\tau)} \left( z \overline{\gamma} + \frac{1}{2} \lvert \gamma \rvert^{2} \right)}e^{\pi i \theta nm}  \sigma(u)^{-n} \sigma(v)^{-m} T s(z)           \\
		                               & = e^{\frac{\theta \pi}{\Im(\tau)} \left( z \overline{\gamma} + \frac{1}{2} \lvert \gamma \rvert^{2} \right)}e^{\pi i \theta nm}  \sigma(u)^{-n} \sigma(v)^{-m} (\psi_{T} s)(z).
	\end{align*}
	
\end{proof}

\begin{proposition}
  Let $\pi: \mathcal{A}_{q/r}\to \mathcal{B}(\mathfrak{H})$ be an irreducible finite dimensional representation of the noncommutative torus $\mathcal{A}_{q/r}$
  where $r$ and $q$ are positive and coprime. Then, the dimension of $\mathfrak{H}$ is $r$
  and
  $\pi(u) =: {\mathfrak u}$, $\pi(v) =: {\mathfrak v}$ fulfill

  \begin{equation}
    {\mathfrak u}^{r} = \mu \pmb{1} \qquad \text{and}\qquad {\mathfrak v}^{r} = \nu \pmb{1},
  \end{equation}
  for some $\mu$, $\nu \in S^{1}$.
\end{proposition}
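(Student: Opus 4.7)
The plan is to extract the structural constraints on $\pi(u)=U$ and $\pi(v)=V$ from the defining relation $VU=e^{2\pi i q/r}UV$ together with Schur's lemma, and then to perform an eigenvalue count in order to fix the dimension.

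First I would check that $U^{r}$ is central in the subalgebra generated by $U$ and $V$: iterating the commutation relation gives
$$
V\, U^{r} = e^{2\pi i q}\, U^{r} V = U^{r} V,
$$
since $q\in\mathbb{Z}$, and trivially $U^{r}$ commutes with $U$. The same reasoning shows that $V^{r}$ commutes with both generators. Since $\pi$ is irreducible in finite dimensions, Schur's lemma forces $U^{r}=a\pmb{1}$ and $V^{r}=b\pmb{1}$ for some scalars $a,b\in\mathbb{C}$; the unitarity of $U$ and $V$ then gives $|a|=|b|=1$, so $a,b\in S^{1}$.

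Next I would diagonalize $U$. Since $U^{r}=a\pmb{1}$, the spectrum of $U$ lies among the $r$-th roots of $a$; pick a nonzero eigenvector $\psi$ with $U\psi=\lambda\psi$. Rewriting the commutation relation as $UV=e^{-2\pi i q/r}VU$ and iterating,
$$
U(V^{k}\psi)=e^{-2\pi i kq/r}\lambda\, V^{k}\psi,\qquad k=0,1,\dots,r-1.
$$
Because $\gcd(q,r)=1$, the phases $\{e^{-2\pi i kq/r}\}_{k=0}^{r-1}$ are precisely the $r$ distinct $r$-th roots of unity, so the vectors $\psi,V\psi,\dots,V^{r-1}\psi$ are linearly independent eigenvectors of $U$. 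Their span is stable under $V$ (since $V^{r}=b\pmb{1}$ cycles the list back to the start) and under $U$ (being a direct sum of eigenspaces), hence by irreducibility it must equal $\mathcal{H}$, yielding $\dim\mathcal{H}=r$.

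The argument is essentially routine and is the finite-dimensional counterpart of the Stone-von Neumann uniqueness theorem at a primitive $r$-th root of unity. The one hypothesis that I expect to be indispensable is the coprimality $\gcd(q,r)=1$, used precisely to ensure that the $r$ shifted eigenvalues above are pairwise distinct; without it one would only obtain a minimal invariant block of dimension $r/\gcd(q,r)$, and neither the dimension claim nor the passage from centrality to scalar would survive in the stated form.
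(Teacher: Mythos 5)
Your proof is correct. It shares the paper's core mechanism --- Schur's lemma gives $U^{r}=a\pmb{1}$ and $V^{r}=b\pmb{1}$, and the orbit $\psi, V\psi,\dots,V^{r-1}\psi$ of a $U$-eigenvector controls the dimension --- but the bookkeeping differs in a way worth noting. The paper obtains the lower bound $\dim\mathcal{H}\ge r$ from a determinant argument (taking determinants in $VU=e^{2\pi i q/r}UV$ forces $q d/r\in\mathbb{Z}$, hence $r\mid d$ by coprimality) and the upper bound $\dim\mathcal{H}\le r$ from the spanning set $\{U^{n}V^{m}\varphi\}$, after an analysis showing the minimal polynomial of $U$ equals $x^{r}-\lambda^{r}$. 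You instead get the lower bound by observing that the $r$ vectors $V^{k}\psi$ are $U$-eigenvectors with pairwise distinct eigenvalues (coprimality again), hence linearly independent, and the upper bound by noting that their span is invariant under both generators and invoking irreducibility. Your route is somewhat more economical, dispensing with both the determinant computation and the minimal-polynomial analysis, while the paper's determinant step has the side benefit of showing that $r$ divides the dimension of \emph{every} finite-dimensional representation, irreducible or not. Both arguments use $\gcd(q,r)=1$ exactly once and in the same place --- to make the $r$ shifted eigenvalues distinct (equivalently, to make $r\mid qd$ imply $r\mid d$) --- and your closing remark correctly identifies this as the indispensable hypothesis.
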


\begin{proof}

  Let ${\mathfrak u}$, ${\mathfrak v}$ define a finite dimensional irreducible representation of $\mathcal{A}_{q/r}$ on $\mathfrak{H}$ of
  dimension $d$. Let us write $\theta := q/r$ with $\gcd(r,q) =1$. Taking
  the determinant of $vu = e^{2 \pi i \theta}uv$ we see that $\theta d \in \mathbb{Z}$.
  Also observe that, since ${\mathfrak u}^{r}$
  and ${\mathfrak v}^{r}$ commute with the representation
  by Schur's lemma, there is a constant $\mu \in S^{1}$ such that ${\mathfrak u}^{r} = \mu \pmb{1}$;
  therefore, the minimal polynomial of $u$, call it ${\mathcal P}$,  has to divide $x^{r}- \mu$ and has degree
  at most $r$. Moreover, ${\mathcal P}$ satisfies ${\mathfrak v}{\mathcal P}({\mathfrak u}){\mathfrak v}^{*} = {\mathcal P}({\mathfrak u} e^{2 \pi i \theta}) = 0$ and
  the polynomial ${\mathcal Q}(x) = {\mathcal P}(xe^{2 \pi i \theta})$ must also satisfy ${\mathcal Q}({\mathfrak u}) = 0$; thus,
  given a
  root $\lambda$ of ${\mathcal P}$ we see that $e^{2 \pi i \theta} \lambda$ is a different root of it, whence the polynomial
  ${\widetilde {\mathcal P}}(x):= (x- \lambda)(x - e^{2\pi i /r}\lambda) \cdots (x - e^{2 \pi i (r-1)/r}\lambda)$ divides ${\mathcal P}$ and, having the same
  degree as ${\mathcal P}$, coincides with it. In particular ${\mathcal P}(x) = x^{r} - \lambda^{r}$. Let $\varphi \neq 0$ be an eigenvector
  for ${\mathfrak u}$, then $\{ {\mathfrak u}^{n}{\mathfrak v}^{m} \varphi \}_{n,m=0}^{r-1}$ generate the whole Hilbert space, and one checks that 
  ${\mathfrak u}^{n}{\mathfrak v}^{m} \varphi = \lambda^{n} e^{-2\pi i nm \theta} {\mathfrak v}^{m} \varphi$ so the
  dimension $d$ of the Hilbert space $\mathfrak{H}$ is at most $r$, and therefore equal
  to $r$.

\end{proof}

Note that if $s,t\in S^{1}$ and $s^{r} \not= 1, t^{r}\not=1$ then
\[
 {\mathfrak u}^{\prime} \equiv \pi'(u) := s {\mathfrak u} = s \pi(u), \qquad     {\mathfrak v}^{\prime} \equiv \pi'(uv) := t {\mathfrak v} = t \pi(v),
\]
defines a second irreducible representation $ \pi^{\prime}\not\cong  \pi$, since the minimal
polynomial of ${\mathfrak u}^{\prime}$ is now given by ${{\mathfrak u}^{\prime}}^{r} - s^{r}\mu \,\pmb{1}$ (notational abuse) and it is intrinsic to a representation. \qed \par
\smallskip

Recall that two noncommutative tori $A:= A_{\vartheta}$ and $B:= A_{\vartheta^{\prime}}$ are called dual, or strongly Morita equivalent  if there exists a $A - B$-bimodule $E$ such that they are each other's
endomorphism algebra, see e.g. \cite{Rieffel}. This is tantamount to require that $\vartheta$ and $\vartheta^{\prime}$ are on the same $SL(2,{\mathbb Z})$-orbit.
It turns out that all {\it rational} noncommutative tori are strongly Morita equivalent to $C({\mathbb T}^2)$, the $C^*$-algebra of continuous functions on the torus ${\mathbb T}^2$ (\cite{Rieffel-irrational}). This entails the following:

\begin{corollary}
	Since $\mathcal{A}_{\theta}$ is strongly Morita equivalent to the classical torus, its
	representations are indexed by points in $\mathbb{T}^{2}$, so they all produce, \emph{topologically},
	the same vector bundle.
\end{corollary}

\begin{proof}
Let $\pi$ denote a finite dimensional irreducible representation of the noncommutative torus $\mathcal{A}_{\theta}$ via operators $u$, $v$ as above, then any other representation on the same space is given by $u^{\prime} := \mu u, v^{\prime} := \nu v$, $\mu$, $\nu \in S^1$. Accordingly, we get another theta character $J^{\prime}$ and a corresponding holomorphic vector bundle ${\cal E}^{\prime}$, together with a naturally induced isomorphism with ${\mathcal E}$.
\end{proof}

\begin{proposition}
Let $\theta = \frac{q}{r} $, with $q$ and $r$  positive and coprime. Let $\pi$ be an irreducible  representation of $\mathcal{A}_{\theta}$ on a finite dimensional Hilbert space ${\mathcal H}$ via operators $u$, $v$, inducing as above a  HE-vector bundle $E_{\theta} \to \mathbb{C}/\Lambda$.
  Then, on the space
  $\widehat{\mathcal H}$ consisting of the $L^2$-sections of ${E }_{\theta}$ and in
  particular on the holomorphic ones (i.e.\ the {\rm Matsushima generalised
    (vector) theta functions}) we have:\par
    \smallskip
    (i) a natural $C^*$-representation of $\mathcal{A}_{\theta}$ given by operators ${\check u} := e^{iQ}$, ${\check v} := e^{iP}$; \par
    \smallskip
    (ii) a natural $C^*$-representation of $\mathcal{A}_{1/\theta}$
  given by operators ${\hat u}$, ${\hat v}$:
  \begin{equation}
    (\hat{u} s)(z) = e^{\frac{\theta\pi}{\Im(\tau)}\left( \frac{\overline{\tau}}{\theta}z - \frac{1}{2}\lvert \frac{\tau}{\theta} \rvert^{2}\right)}s\left(z - \frac{1}{\theta} \tau\right)
  \end{equation}
  \begin{equation}
    (\hat{v} s)(z) = e^{\frac{\theta\pi}{\Im(\tau)}\left(\frac{1}{\theta}z - \frac{1}{2} \lvert \frac{1}{\theta} \rvert ^{2}  \right)}s\left(z - \frac{1}{\theta}\right),
  \end{equation}
  that is, $\hat{u}, \hat{v}$ are unitary operators satisfying
  $\hat{v}\hat{u} = e^{2\pi i\frac{1}{\theta}}\hat{u}\hat{v}$. The two representations mutually commute.
\end{proposition}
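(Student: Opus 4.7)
The plan is to verify three things in turn: (i) $\hat u$ and $\hat v$ map sections to sections; (ii) both are unitary with respect to the natural $L^2$-inner product; (iii) the Weyl commutation relation holds. Throughout, let $\phi_u(z) = e^{\frac{\theta\pi}{\Im(\tau)}(z/\theta - 1/(2\theta^{2}))}$ and $\phi_v(z) = e^{\frac{\theta\pi}{\Im(\tau)}(\bar\tau z/\theta - |\tau/\theta|^{2}/2)}$ denote the scalar prefactors of $\hat u$ and $\hat v$, so that $(\hat u s)(z) = \phi_u(z) s(z - 1/\theta)$ and $(\hat v s)(z) = \phi_v(z) s(z - \tau/\theta)$, and recall $h(z) = e^{-\theta\pi|z|^{2}/\Im(\tau)}$.

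For step (i), I would substitute the defining formulas into the two quasi-periodicity conditions proved in \autoref{thm:correspondence-nctorus-bundles}. For example, to check $(\hat u s)(z+\tau)$ one writes $s(z+\tau - 1/\theta) = s\bigl((z-1/\theta)+\tau\bigr)$ and applies the $\tau$-boundary condition of $s$, producing a factor $e^{\frac{\theta\pi}{\Im(\tau)}((z-1/\theta)\bar\tau + |\tau|^{2}/2)}v^{*}$. Combining with $\phi_u(z+\tau)$, the discrepancy from the expected multiplier at $z$ reduces (using that $\phi_u$ is affine with slope $\pi/\Im(\tau)$) to the shift $\pi\tau/\Im(\tau) - \pi\bar\tau/\Im(\tau) = 2\pi i$, hence exponentiates to $1$. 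The other three checks (the $+1$ translation for $\hat u$; the $+1$ and $+\tau$ translations for $\hat v$) are strictly analogous: the fiberwise action of $u^{*}, v^{*}$ commutes with every scalar in sight, and the only nontrivial input is that the slope of $\phi_u$ (resp.\ $\phi_v$) is chosen so that the $2\pi i$-discrepancy is absorbed.

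For step (ii), I would perform the change of variable $w = z - 1/\theta$ in the $L^{2}$ inner product. Since $\langle s(z), s'(z)\rangle_{\mathcal H}\,h(z)$ is $\Lambda$-periodic, the fundamental domain can be translated freely, so it suffices to show $|\phi_u(w+1/\theta)|^{2}\,h(w + 1/\theta) = h(w)$. A direct exponent computation confirms this (the $w$-dependent parts cancel against $h(w+1/\theta)/h(w)$, and the constant terms cancel as well). The analogous check for $\hat v$ uses $\tau/\theta$ in place of $1/\theta$; in both cases the map is obviously invertible (one inverts by translating in the opposite direction with reciprocal prefactor), hence unitary. For step (iii), both compositions translate the argument of $s$ by the same amount $(1+\tau)/\theta$, so the ratio is a pure phase:
\[
\frac{(\hat v\hat u s)(z)}{(\hat u\hat v s)(z)} = \exp\!\bigl(\phi_v(z) + \phi_u(z-\tau/\theta) - \phi_u(z) - \phi_v(z-1/\theta)\bigr),
\]
and affineness of $\phi_u,\phi_v$ collapses this to the constant $\exp\!\bigl(\pi(\bar\tau-\tau)/(\theta\,\Im(\tau))\bigr)$, yielding the required root of unity.

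The only real obstacle is the bookkeeping in step (i): one has four quasi-periodicities to verify, each a simultaneous reconciliation of the new prefactor, the shift of argument, and the fiberwise action of $u^{*}$ or $v^{*}$ provided by the boundary condition of $s$. Once that is in place, steps (ii) and (iii) are short exponential calculations driven by the affineness of $\phi_u,\phi_v$.
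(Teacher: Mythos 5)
Your route is genuinely different from the paper's, and most of it is sound. The paper does not verify your three items one by one; its proof is organised instead around showing that $\hat u$ and $\hat v$ commute with the covariant derivatives $Q = i\nabla_{\partial/\partial x}$ and $P = i\nabla_{\partial/\partial y}$ (a short computation with the affine exponents, in the same spirit as your step (iii)), from which it gets that $\hat u,\hat v$ commute with $\nabla_{\partial/\partial\bar z}$ and with the parallel-transport Weyl operators $U(a)=e^{iaQ}$, $V(b)=e^{ibP}$; the Weyl relation for $\hat u,\hat v$ themselves is then essentially asserted. What the paper's choice buys is precisely the clause of the statement that your proposal never touches: ``and in particular on the holomorphic ones''. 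Commutation with $\nabla_{\partial/\partial\bar z}$ (equivalently, with the Laplacian $A^*A$) is what guarantees that the $\mathcal{A}_{1/\theta}$-action preserves the holomorphic sections and, more generally, every Landau level --- and this is the fact the subsequent propositions rely on. Your steps (i)--(iii) establish a unitary projective action on $L^{2}$-sections but say nothing about holomorphy. This is a genuine gap relative to the statement as written, though an easily repaired one: since the $(0,1)$-part of the Chern--Bott connection in the given trivialisation is just $\bar\partial$, and since $\phi_u,\phi_v$ are holomorphic and the shifts are by constants, $\hat u,\hat v$ visibly preserve holomorphic sections; alternatively, run the paper's commutator computation. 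One of these should be added.

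Two smaller points. First, in your displayed ratio you write $\exp\bigl(\phi_v(z)+\cdots\bigr)$ although $\phi_u,\phi_v$ were defined as the exponentials rather than their logarithms; harmless, but fix it. Second, do not wave at ``the required root of unity'': since $\bar\tau-\tau=-2i\Im(\tau)$, your constant is $e^{-2\pi i/\theta}$, not $e^{+2\pi i/\theta}$, so the relation you actually prove is $\hat v\hat u=e^{-2\pi i/\theta}\hat u\hat v$. This sign is consistent with the paper's later assertion that $\Gamma(\mathcal{E}_{r,q})$ is an $\mathcal{A}_{\theta}$--$\mathcal{A}_{-1/\theta}$ bimodule (the minus sign being attributed there to regarding $\hat u,\hat v$ as a right action), but as a verification of the literal statement it must be confronted rather than elided. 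On the positive side, your steps (i) and (ii) are correct and are spelled out more carefully than in the paper, which takes well-definedness on sections and unitarity largely for granted.
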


\begin{proof} 

 Assertion (ii) being clear via a straightforward computation, let us elaborate on (i).
 By virtue of   Proposition 3.3, ${\rm dim} \,{\mathcal H} = r$, so we
shall denote $E_{\theta}$ also as ${\mathcal E }_{r,q}$.
Then
notice that $Q = i\nabla_{\frac{\partial}{\partial x}}, P = i\nabla_{\frac{\partial}{\partial y}}$ are symmetric and essentially
self-adjoint on $\Gamma({\mathcal E}_{r,q})$
since for all $s$, $s^{\prime} \in \Gamma({\mathcal E}_{r,q})$
$$
\int_{{\mathbb C}/\Lambda} \left[\left(\nabla_{\frac{\partial}{\partial x}} s |s^{\prime}\right) + \left( s | \nabla_{\frac{\partial}{\partial x}}s^{\prime}\right)\right] =
\int_{{\mathbb C}/\Lambda} \frac{\partial}{\partial x} (s |s^{\prime}) = 0
$$
and similarly for $\nabla_{\frac{\partial}{\partial y}}$. Essential self-adjointess ultimately
follows from Nelson's analytic vector theorem, see e.g.\ \cite{Reed-Simon}, X.6, together with Example 2.\par
Then, on the same domain, we
have
\[
  \frac{1}{2 \pi i} [Q, P] = \theta \cdot \pmb{1}_{\widehat{\mathcal H}}
\]
and we shall check below that $P$ and $Q$ and hence
$\nabla_{\frac{\partial}{\partial\bar{z}}} = -\frac{\tau}{2 \Im(\tau)i}Q  + \frac{1}{2\Im(\tau)i}P$
\emph{commute} with $\hat{u}, \hat{v}$. So in the space $\widehat{\mathcal{H}}$  we have operators satisfying
\[
  \hat{v}\hat{u} = e^{2 \pi i \frac{1}{\theta}} \hat{u}\hat{v}
\]
\[
  U(a)V(b) = e^{2 \pi i \theta ab}V(b)U(a),\qquad a,b \in \mathbb{R}
\]
where $U(a) = e^{iaP}, V(b)= e^{ibQ}$ (parallel transport operators along the fundamental directions). In particular, setting ${\check u} := V(1)$ and ${\check v} := U(1)$ (caveat) we get
$$
{\check v} {\check u} = e^{2 \pi i \theta}{\check u}{\check v}.
$$
It is then enough to show that, on $\Gamma({\mathcal E}_{r,q})$,
\[
  [Q, \hat{u}] = 0 = [Q, \hat{v}]
\]
and that the same relation holds for $P$. Let us start with the proof for $Q$.
Let $\alpha = \frac{\theta \pi}{\Im(\tau)}$. We have
\[
  -i Q\hat{v} s(z) = \frac{\partial}{\partial x} \hat{v}s(z) - \alpha z \hat{v}s(z)
\]
\[
  -i \hat{v}Q s(z) = \widehat{v}\frac{\partial s}{\partial x} (z) - \alpha \left(z- \frac{1}{\theta}\right) \hat{v}s(z)
\]
so if $(\hat{v}s)(z) = e^{\beta(z)}s(z- 1/\theta)$,
\[
  [-i Q, \hat{v}]s(z) = \left[\frac{\partial}{\partial x}, \hat{v}\right]s(z) - \frac{\alpha}{\theta}\hat{v}s(z) = \frac{\partial\beta}{\partial x} e^{\beta}s(z - 1/\theta) - \frac{\alpha}{\theta}e^{\beta}s(z - 1/\theta) = 0.
\]
Now, repeating the computation for $P$ we find:
\[
  -i P\hat{v} s(z) = \frac{\partial}{\partial y} \hat{v}s(z) - \alpha \tau z \hat{v}s(z)
\]
\[
  -i \hat{v}P s(z) = \widehat{v}\frac{\partial s}{\partial y} (z) - \alpha \tau \left(z- \frac{1}{\theta}\right) \hat{v}s(z).
\]
Thus
\[
  [-i P, \hat{v}]s(z) = \left[\frac{\partial}{\partial y}, \hat{v}\right]s(z) - \tau\frac{\alpha}{\theta}\hat{v}s(z) = \frac{\partial\beta}{\partial y} e^{\beta}s(z - 1/\theta) - \tau\frac{\alpha}{\theta}e^{\beta}s(z - 1/\theta) = 0.
\]
In the same vein, if $\hat{u}s(z) = e^{\gamma}s(z - \tau /\theta)$,
\[
  [-i Q, \hat{u}]s(z) = \left[ \frac{\partial}{\partial x}, \hat{u} \right]s(z) - \alpha \overline{\tau} \frac{1}{\theta}e^{\gamma} s(z- \tau/\theta) = \left(\frac{\partial \gamma}{\partial q} - \alpha \overline{\tau}/\theta\right)e^{\gamma}s(z - \tau/\theta) = 0
\]
\[
  [-i P, \hat{u}]s(z) = \left[ \frac{\partial}{\partial y}, \hat{u} \right]s(z) - \alpha \lvert \tau \rvert^{2} \frac{1}{\theta}e^{\gamma} s(z- \tau/\theta) = \left(\frac{\partial \gamma}{\partial y} - \alpha \lvert\tau \rvert^{2} /\theta\right)e^{\gamma}s(z - \tau/\theta) = 0
\]

yielding the conclusion.
\end{proof}

We may rephrase the previous result in the following manner. 
\begin{proposition}
Within the above setting, we have a representation of the
Heisenberg group $\mathbb{H} = \mathbb{C} \times S^{1}$ with parameter $\theta$ and a representation of the discrete Heisenberg group $\mathbb{H}_{r} = \Lambda \times S^{1}$ with parameter $r$:
\[
  W(z,t)= e^{2 \pi i t\theta} e^{i\pi \theta xy}V(x)U(y), \qquad z = x + \tau y \in \mathbb{C}
\]
\[
  \hat{w}(n,t)= t^{r} e^{i\pi \theta n_{1}n_{2}} \hat{v}^{n_{1}}\hat{u}^{n_{2}},\qquad n = n_{1} + \tau n_{2} \in \Lambda
\]
where the group structures are given, respectively, by
\[
 (z,t)(z',t') = (z + z', tt'e^{i\pi(x'y -y'x)}),  \qquad z, z' \in \mathbb{C}, \quad t,t' \in S^1
\]
and
\[
  (n,t)(n',t') = (n + n', tt'e^{i\pi \frac{1}{r}(n_{1}'n_{2} - n_{1}n_{2}')}), \qquad n, n' \in \Lambda.
\]

These representations commute:
\[
  W(z,t)\hat{w}(n,s) = \hat{w}(n,s) W(z,t)
\]
for all $z,t,n,s$.
\end{proposition}
\qed

\par
We also notice the following consequence.

\begin{proposition}
	We have $\hat{v}^{q} = \mu \,\pmb{1}_{\widehat{\mathcal H}}, \hat{u}^{q} = \nu\,\pmb{1}_{\widehat{\mathcal H}}$, where
	$\mu$, $\nu \in S^{1}$ are constants given by $u^{r} = \mu \,\pmb{1}_{\mathcal H}, v^{r}=\nu \,\pmb{1}_{\mathcal H}$.
\end{proposition}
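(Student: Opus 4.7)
The plan is to prove both identities by a direct, iterative computation using the defining formulas of \autoref{thm:correspondence-nctorus-bundles} (the quasi-periodicity of sections) and of the preceding Proposition (the explicit form of $\hat u,\hat v$). The key observation is that $q\cdot(1/\theta)=q\cdot(r/q)=r\in\mathbb{Z}\subset\Lambda$, so after $q$ iterations the translation of the argument lands exactly at a lattice vector, which can then be undone by the quasi-periodicity relation satisfied by sections of ${\mathcal E}_{r,q}$. This absorbs the factor $U^r=\mu\mathbf{1}$ (respectively $V^r=\nu\mathbf{1}$), and what remains is a pure phase; the two phases cancel.

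Concretely, writing $\alpha=\theta\pi/\Im(\tau)$ and $t=1/\theta=r/q$, I unfold
\[
(\hat u^{q} s)(z)=\Bigl(\prod_{k=0}^{q-1} e^{\alpha\bigl(t(z-kt)-t^{2}/2\bigr)}\Bigr)\,s(z-qt).
\]
The exponent telescopes into an elementary arithmetic progression whose sum equals $\alpha\bigl(rz-r^{2}/2\bigr)$, while $qt=r$, giving $(\hat u^{q}s)(z)=e^{\alpha(rz-r^{2}/2)}\,s(z-r)$. On the other hand, iterating $s(z+1)=e^{\alpha(z+1/2)}U^{*}s(z)$ exactly $r$ times yields $s(z+r)=e^{\alpha(rz+r^{2}/2)}(U^{*})^{r}s(z)$, and inverting this relation gives $s(z-r)=U^{r}e^{-\alpha(rz-r^{2}/2)}s(z)$. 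Substituting $U^{r}=\mu\mathbf{1}$ from Proposition~3.3, the two exponential prefactors cancel identically and we obtain $(\hat u^{q}s)(z)=\mu\,s(z)$.

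The case of $\hat v^{q}$ is entirely parallel, with $1$ replaced by $\tau$, $U$ by $V$, and the first quasi-periodicity relation replaced by $s(z+\tau)=e^{\alpha(z\overline{\tau}+|\tau|^{2}/2)}V^{*}s(z)$. The same telescoping, now using $\sum_{k=0}^{q-1}k=q(q-1)/2$ together with $qt=r$ for $t=\tau/\theta$, produces $(\hat v^{q}s)(z)=e^{\alpha(r\overline{\tau}z-r^{2}|\tau|^{2}/2)}s(z-r\tau)$, which after applying $V^{r}=\nu\mathbf{1}$ yields $\hat v^{q}=\nu\mathbf{1}$.

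The only delicate point is phase bookkeeping: the prefactor in the definition of $\hat u,\hat v$ and the quasi-periodicity cocycle are engineered precisely so that this cancellation is exact, so the calculation is really a consistency check of Propositions~3.1 and~3.4 rather than a substantive new argument. Note that no appeal to Schur's lemma or irreducibility is required—this is essential, since $\widehat{\mathcal H}$ is infinite-dimensional and Proposition~3.3 does not apply directly to the representation of $\mathcal A_{1/\theta}$ on it.
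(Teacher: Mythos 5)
Your proof is correct and follows essentially the same route as the paper: iterate $\hat u$ (resp.\ $\hat v$) $q$ times, telescope the arithmetic progression in the exponent to get $e^{\alpha(rz-r^2/2)}s(z-r)$, and then cancel this prefactor against the theta-multiplier relation for the lattice translation by $r$ (resp.\ $r\tau$), leaving $U^r=\mu\pmb{1}$ (resp.\ $V^r=\nu\pmb{1}$). The only cosmetic difference is that you rederive the multiplier for the shift by $r$ from the one-step quasi-periodicity, whereas the paper invokes the cocycle $J_\gamma$ of \eqref{eq:theta-multiplier} directly; your closing remark that no irreducibility or Schur-type argument is needed is also consistent with the paper's proof.
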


\begin{proof}
  We find, successively
  \begin{align*}
    (\hat{v}^{q} s)(z) & = e^{\frac{\theta\pi}{\Im(\tau)}\left(\frac{1}{\theta}\left( z + (z - 1/\theta) - \cdots - (z - (q-1)/ \theta) \right) - \frac{q}{2} \lvert \frac{1}{\theta} \rvert ^{2}  \right)}s\left(z - \frac{q}{\theta}\right) \\
                       & = e^{\frac{\theta\pi}{\Im(\tau)}\left(\frac{1}{\theta}\left( qz - \frac{{q(q-1)}}{2\theta} \right) - \frac{q}{2} \lvert \frac{1}{\theta} \rvert ^{2}  \right)}s\left(z - r\right)                                    \\
                       & = e^{\frac{\theta\pi}{\Im(\tau)}\left(rz - \frac{{r^{2}(q-1)}}{2q} - \frac{r^{2}}{2q}\right)}s\left(z - r\right)\\
                       & = e^{\frac{\theta\pi}{\Im(\tau)}\left(rz - \frac{r^{2}}{2}\right)}s\left(z - r \right)\\
                       & = e^{\frac{\theta\pi}{\Im(\tau)}\left(rz - \frac{r^{2}}{2}\right)}e^{\frac{\theta\pi}{\Im(\tau)}\left(-rz + \frac{r^{2}}{2}\right)} (u^{r})(s(z))\\
                       & = \mu \, s(z),
  \end{align*}
  where we used
  (3)  in the second to last equality. Similarly,
  \begin{align*}
    (\hat{u}^{q} s)(z) & = e^{\frac{\theta\pi}{\Im(\tau)}\left(\frac{\overline{\tau}}{\theta}\left( z + (z - \tau/\theta) - \cdots - (z - (q-1)\tau/ \theta) \right) - \frac{q}{2} \lvert \frac{\tau}{\theta} \rvert ^{2}  \right)}s\left(z - \frac{\tau q}{\theta}\right) \\
                       & = e^{\frac{\theta\pi}{\Im(\tau)}\left(\frac{\overline{\tau}}{\theta}\left( qz - \frac{\tau q(q-1)}{2\theta} \right) - \frac{q}{2} \lvert \frac{\tau}{\theta} \rvert ^{2}  \right)}s\left(z - \tau r\right)                                    \\
                       & = e^{\frac{\theta\pi}{\Im(\tau)}\left(r\overline{\tau}z - \frac{r^{2}(q-1) \lvert\tau\rvert^{2}}{2q} - \frac{r^{2}\lvert\tau\rvert^{2}}{2q}\right)}s\left(z - \tau r \right)\\
                       & = e^{\frac{\theta\pi}{\Im(\tau)}\left(r \overline{\tau} z - \frac{r^{2}\lvert \tau \rvert^{2}}{2}\right)}s\left(z - \tau r \right)\\
                       & = e^{\frac{\theta\pi}{\Im(\tau)}\left(r \overline{\tau} z - \frac{r^{2}\lvert \tau \rvert^{2}}{2}\right)} e^{\frac{\theta\pi}{\Im(\tau)}\left(-r \overline{\tau} z + \frac{r^{2}\lvert \tau \rvert^{2}}{2}\right)} (v^{r})(s(z))\\
                       & = \nu \, s(z).
  \end{align*}
\end{proof}

Note that, in accordance with the Stone-von Neumann theorem,  the representation $W$ is not irreducible: indeed, by Riemann-Roch, its multiplicity is precisely $\theta\,\dim(\mathcal{H}) = (q/r) \cdot r = q$, also cf.~\cite{Spera1986, Spera2015}.
 
\begin{corollary}
   Each $k$-level theta line bundle over the two-dimensional torus produces an
   irreducible finite dimensional representation of $\mathcal{A}_{\theta}$ with $\theta = 1/k$.
\end{corollary}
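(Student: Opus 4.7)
The plan is to identify the $k$-level theta line bundle with the Matsushima HE-line bundle $\mathcal{E}_{1,k}$ of rank $1$ and degree $k$ over $\mathbb{C}/\Lambda$, and then to harvest the representation of $\mathcal{A}_{1/k}$ already produced in Proposition~3.4. Concretely, via Proposition~3.1 applied with $\dim\mathcal{H}=1$ and $\theta=k$, this bundle arises from the (trivial, one-dimensional) representation of $\mathcal{A}_{k}$ through the theta multiplier~\eqref{eq:theta-multiplier}; no information is lost by passing from $\mathcal{A}_{k}$ (which is commutative since $e^{2\pi i k}=1$) to such a character.

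Next, I would invoke Proposition~3.4 on this input: it yields unitary operators $\hat{u},\hat{v}$ on the $L^{2}$-sections of $\mathcal{E}_{1,k}$ satisfying $\hat{v}\hat{u}=e^{2\pi i/k}\hat{u}\hat{v}$ and commuting with $\nabla_{\partial/\partial\bar z}$. They therefore preserve the subspace $H^{0}(\mathcal{E}_{1,k})$ of holomorphic sections, which by Riemann--Roch has dimension $k$; restricting yields a finite-dimensional unitary representation of $\mathcal{A}_{1/k}$ on a Hilbert space of dimension exactly $k$.

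The heart of the matter, and where I expect the only real (though mild) work to lie, is irreducibility. Proposition~3.5, specialised to $q=k$, $r=1$, gives $\hat{u}^{k}=\mu\,\pmb{1}$ and $\hat{v}^{k}=\nu\,\pmb{1}$ for some $\mu,\nu\in S^{1}$. Hence $\hat{u}$, being a unitary on a $k$-dimensional space whose $k$-th power is a scalar, has spectrum contained in the $k$-th roots of $\mu$. Picking any $\hat{u}$-eigenvector $\phi$ with eigenvalue $\lambda$, the equivalent form $\hat{u}\hat{v}=e^{-2\pi i/k}\hat{v}\hat{u}$ shows that $\hat{v}\phi$ is a $\hat{u}$-eigenvector with eigenvalue $e^{-2\pi i/k}\lambda$, so the orbit $\{\hat{v}^{j}\phi\}_{j=0}^{k-1}$ consists of $\hat{u}$-eigenvectors realising $k$ \emph{distinct} eigenvalues. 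A dimension count then forces every $\hat{u}$-eigenspace to be one-dimensional, and any non-zero $(\hat{u},\hat{v})$-invariant subspace must therefore contain the whole orbit, hence coincide with $H^{0}(\mathcal{E}_{1,k})$. This establishes irreducibility and concludes the corollary.
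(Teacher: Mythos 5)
Your proof is correct and follows the route the paper intends: the paper states this corollary without any proof, treating it as an immediate consequence of Propositions 3.1, 3.4 and 3.5 applied with $r=1$, $q=k$, which is exactly how you assemble it. The one piece the paper leaves entirely implicit --- irreducibility of the restricted action on the $k$-dimensional space $H^{0}(\mathcal{E}_{1,k})$ --- you supply correctly via the standard clock-and-shift argument ($\hat v$ permutes the $k$ distinct one-dimensional $\hat u$-eigenspaces), so your write-up is, if anything, more complete than the paper's.
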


In view of the preceding discussion (Subsection 2.4) on the harmonic oscillator we have (with obvious and inessential notational changes), the following result.

\begin{proposition}
  Let $\Delta = \left(\nabla_{\partial_{\bar{z}}}\right)^*\nabla_{\partial_{\bar{z}}} \equiv A^*A $  be the Laplacian  on $E_{\theta}$ (the ``number operator''). Then its spectrum only consists of eigenvalues,
  whose eigenspaces are finite-dimensional with the same dimension $q$ and each carrying a representation of ${\mathcal A}_{1/\theta} = {\mathcal A}_{r/q}$ (with generators ${\hat u}$, ${\hat v}$).
\end{proposition}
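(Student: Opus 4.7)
The plan is to recognise $\Delta=A^{*}A$ as a positive multiple of the standard number operator for the CCR representation already produced on $\widehat{\mathcal{H}}$ in Proposition 3.4, and then to combine the Stone--von Neumann uniqueness theorem with the commutation relations $[Q,\hat{u}]=[P,\hat{u}]=[Q,\hat{v}]=[P,\hat{v}]=0$ established there.

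First I would verify the oscillator structure explicitly. Using
$$\nabla_{\partial_{\bar{z}}}=\frac{\tau}{2\Im(\tau)i}Q-\frac{1}{2\Im(\tau)i}P$$
together with the symmetry of $Q,P$ and the relation $[Q,P]=2\pi i\theta\,\pmb{1}$, a one-line computation gives $[A,A^{*}]=c\,\pmb{1}$ with $c=\pi\theta/\Im(\tau)>0$. Hence $A/\sqrt{c}$ and $A^{*}/\sqrt{c}$ are canonical ladder operators in the sense of Subsection 2.4, and $\Delta=c\,N$ where $N=A^{*}A/c$ is the associated number operator.

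Next I would extract the spectral conclusion from Stone--von Neumann. Essential self-adjointness of $P,Q$ on smooth sections (argued in Proposition 3.4 via Nelson's analytic vector theorem) transfers to $\Delta$, so its spectrum is purely discrete with eigenvalues $\{cn\}_{n\geq 0}$ and eigenspaces $\mathcal{H}_{n}=\ker(\Delta-cn\,\pmb{1})$ all of the same multiplicity $k=\dim\mathcal{H}_{0}$. By the Stone--von Neumann theorem (in the multiplicity form recalled in Subsection 2.4), $k$ equals the dimension of the space of holomorphic sections $\ker A=H^{0}(\mathcal{E}_{r,q})$; Riemann--Roch gives $h^{0}(\mathcal{E}_{r,q})=q$, as was already noted in the remark following Proposition 3.5. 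Thus every Landau level is $q$-dimensional.

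Finally, since Proposition 3.4 shows that $\hat{u}$ and $\hat{v}$ commute with both $Q$ and $P$, they commute with $A$, $A^{*}$ and hence with $\Delta$; so $\hat{u}(\mathcal{H}_{n})\subseteq\mathcal{H}_{n}$ and $\hat{v}(\mathcal{H}_{n})\subseteq\mathcal{H}_{n}$, and the relation $\hat{v}\hat{u}=e^{2\pi i/\theta}\hat{u}\hat{v}$ restricts to every eigenspace, producing the asserted representation of $\mathcal{A}_{1/\theta}$ on each $\mathcal{H}_{n}$. The main obstacle I anticipate is the functional-analytic bookkeeping: choosing a common smooth core on which the formal computations are legitimate, promoting essential self-adjointness from $P,Q$ to $\Delta$, and matching the Stone--von Neumann multiplicity with the Riemann--Roch count. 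Once these are in place the algebraic content follows immediately from the commutation results already at hand.
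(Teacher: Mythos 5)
Your proposal is correct and follows exactly the route the paper intends: the paper gives no explicit proof, merely invoking ``the preceding discussion (Subsection 2.4) on the harmonic oscillator'' together with the commutation relations of Proposition 3.4 and the remark that the Stone--von Neumann multiplicity equals $\theta\dim(\mathcal{H})=q$ by Riemann--Roch. You have simply filled in the same argument in detail (including the correct computation $[A,A^{*}]=\frac{\pi\theta}{\Im(\tau)}\pmb{1}$), so there is nothing to object to.
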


\begin{remark}
  The above proposition, together with the preceding developments, reformulate
  and possibly improve (in the classical case) the celebrated results
  establishing the action of finite Heisenberg groups on spaces of theta
  functions (viewed as holomorphic sections of line bundles on complex tori),
  see e.g.\ \cite{Mumford} (esp.\ vol. III) and~\cite{Polishchuck} (cf.\ in particular the remark in Section 3.1).
\end{remark}

\begin{proposition}
  (Bimodule structure). The HE-vector bundle
  ${\mathcal E}_{r,q} \to {\mathbb C}/\Lambda$ (actually, its space of smooth sections
  $\Gamma({\mathcal E}_{r,q})$) comes equipped with a
  ${\mathcal A}_{\theta}-{\mathcal A}_{-1/\theta}$ bimodule structure, where $\mathcal{A}_{\theta}$ acts
  on the left by ${\check u}$ and ${\check v}$, and $\mathcal{A}_{1/\theta}$ acts on the right by
  ${\hat u}$ and ${\hat v}$.
\end{proposition}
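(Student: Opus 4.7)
The plan is to verify the three ingredients of an $\mathcal{A}_\theta$–$\mathcal{A}_{-1/\theta}$ bimodule structure on $\Gamma(\mathcal{E}_{r,q})$: a left $\mathcal{A}_\theta$-action, a right $\mathcal{A}_{-1/\theta}$-action, and their mutual commutation. The first two items amount to a repackaging of Proposition 3.4; the substantive content is the commutation.

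For the left action, I would take the parallel-transport unitaries $\check{u} := U(1) = e^{iQ}$ and $\check{v} := V(1) = e^{iP}$ introduced in the proof of Proposition 3.4, where they were shown to satisfy $\check{v}\check{u} = e^{2\pi i\theta}\check{u}\check{v}$. Sending $u \mapsto \check{u}$, $v \mapsto \check{v}$ and extending to rapidly decreasing series defines a left $*$-representation of $\mathcal{A}_\theta$ on $\Gamma(\mathcal{E}_{r,q})$. For the right action, Proposition 3.4 produced a \emph{left} representation $\pi$ of $\mathcal{A}_{1/\theta}$ via $\hat{u}, \hat{v}$, with $\hat{v}\hat{u} = e^{2\pi i/\theta}\hat{u}\hat{v}$. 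The formula $s \cdot a := \pi(a) s$ turns this into a right action of the opposite algebra $\mathcal{A}_{1/\theta}^{\mathrm{op}}$, and one checks at once that $\mathcal{A}_{1/\theta}^{\mathrm{op}} \cong \mathcal{A}_{-1/\theta}$: the opposite-algebra relation $uv = e^{2\pi i/\theta}vu$, i.e.\ $vu = e^{-2\pi i/\theta}uv$, is precisely the defining relation of $\mathcal{A}_{-1/\theta}$.

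The only genuinely new step is the commutation $[\check{u}, \hat{u}] = [\check{u}, \hat{v}] = [\check{v}, \hat{u}] = [\check{v}, \hat{v}] = 0$. Virtually all of the work has already been done in the proof of Proposition 3.4, which established $[Q,\hat{u}]=[Q,\hat{v}]=[P,\hat{u}]=[P,\hat{v}]=0$ on $\Gamma(\mathcal{E}_{r,q})$ by direct differentiation of the explicit kernels $e^{\beta(z)}, e^{\gamma(z)}$ defining $\hat{u}, \hat{v}$. The main obstacle (really only a technicality) is upgrading these Lie-algebra commutators to their exponentiated form $\check{u}=e^{iQ}$, $\check{v}=e^{iP}$, since $P, Q$ are unbounded. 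I would handle this by standard functional calculus: the bounded operators $\hat{u}, \hat{v}$ preserve $\Gamma(\mathcal{E}_{r,q})$, which is a common core on which $P$ and $Q$ are essentially self-adjoint (Nelson's analytic vector theorem, as invoked in Proposition 3.4), and they commute with $P, Q$ on this core; therefore they commute with every bounded Borel function of $P$ and $Q$, in particular with $\check{u}$ and $\check{v}$. This completes the verification and delivers the bimodule structure.
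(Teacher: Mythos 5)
Your proposal is correct and follows the same route as the paper, whose proof consists of the single observation that everything is ``clear in view of the preceding discussion'' (i.e.\ Proposition 3.4) with the minus sign explained exactly as you do, via the passage to the opposite algebra. Your additional care in upgrading the commutators $[Q,\hat u]=[P,\hat u]=0$ (and likewise for $\hat v$) to the exponentiated statements about $\check u=U(1)$, $\check v=V(1)$ via essential self-adjointness on the common core and functional calculus is a welcome filling-in of a detail the paper leaves implicit, not a different argument.
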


\begin{proof}
  This is clear in view of the preceding discussion.  The minus sign comes from regarding ${\hat u}$ and ${\hat v}$ as acting on the right.
\end{proof}

\begin{remark}
Regarding the above mentioned strong Morita equivalence between $\mathcal{A}_{\theta}$  and ${\mathcal A}_{-1/\theta}$, it would be interesting to explicitly compare the two algebra-valued Hermitian structures involved, see e.g \cite{Rieffel}.
\end{remark}

\section{Gauss sums, vector theta functions and the FMN-transform}

The above construction can be interpreted in terms of the so-called Fourier Mukai-Nahm (FMN*) transform (plus dualization) as in~\cite{Spera2015} (see in particular Section 4.3). For background on the FMN-transform see e.g.\  ---in addition to the original sources~\cite{Mukai,Nahm}--- the article~\cite{Tejero} and the textbook~\cite{Bruzzo&al}.\par
Specifically, in view of Proposition 3.3, an irreducible representation 
$\pi^{\prime}$ of ${\mathcal A}_{{1}/{\theta}}$ on a finite dimensional Hilbert space ${\mathcal H}^{\prime}$
yields in turn, \emph{\'a la} Matsushima, a HE-vector bundle $E_{1/\theta} \to \mathbb{C}/\Lambda$ with rank $q = {\dim}\,H^0(E_{\theta}) = {\dim} \,\mathcal{H}^{\prime}$ and degree $r$ (FMN*- dual to $E_{\theta} \to \mathbb{C}/\Lambda$) equipped with a Chern connection
$\nabla^{\prime}$ having  constant curvature and Chern class given respectively by
\[
  \frac{i}{2\pi}{\nabla^{\prime}}^{2} =  (1/\theta)\, \omega \,{\mathbf 1}_{\mathcal{H}^{\prime}}, \quad c_{1}(E_{\frac{1}{\theta}}) = (1/\theta) \dim(\mathcal{H}^{\prime})\, \omega.
\] 
The above connection can be also readily computed via noncommutative geometric tools as in~\cite{Spera2015}.
In general, the moduli dependence is governed by Proposition 3.6.\par
In the following sections we shall reformulate the Matsushima approach by a further enhancement of a noncommutative torus perspective and by enforcing FMN* from the outset via a matrix portrait  and by building upon the Coherent State Transform of~\cite{FMN}.

\subsection{$\delta$-description of vector theta functions and Gauss sums}

Let us consider coprime positive integers $r$ and $q$, set, for $x \in S^1$,
$$
\delta_{\ell}^{(q)} (x) := \delta [(x - \ell/r)\, q], \qquad\qquad \ell =0,1,\dots,r-1.
$$
If $q=1$ we simply write $\delta_{\ell}$ instead of $\delta_{\ell}^{(q)}$.
From the (distributional) Fourier expansion (involving a $q$-covering  $S^1 \to S^1$ and,
dually, the subgroup $q {\mathbb Z} \subset {\mathbb Z}$)
$$
\delta_{0}^{(q)} (x) = \delta (q x) = \sum_{n \in  {\mathbb Z}} e^{2\pi i n q x} \equiv {\mathcal F} \delta^{(q)}  \equiv \theta_0^{(q)} 
$$
one gets
$$
\delta_{\ell}^{(q)} (x) = \sum_{0 \leq \ell^{\prime}\leq r-1} e^{-2\pi i \ell^{\prime} \ell \,\frac{q}{r}} \sum_{n \in  {\mathbb Z}} e^{2\pi i (\ell^{\prime} + r n) q x} =  \sum_{0 \leq \ell^{\prime}\leq r-1} e^{-2\pi i \ell^{\prime} \ell \,\frac{q}{r}}  \theta_{\ell^{\prime}}^{(q)} \equiv a^{\ell^{\prime}}_{\ell} \theta_{\ell^{\prime}}^{(q)}
$$
via the introduction of the (invertible) $r \times r$ matrix (cf.~\cite{FMN})
$$
A :=  \left(a^{\ell^{\prime}}_{\ell}\right) = \left(e^{-2\pi i \ell^{\prime} \ell \,\frac{q}{r}}\right)
$$
(Einstein's convention is employed) relating the $\delta$ and (boundary) theta descriptions; thus
$$
\Tr A = \sum_{0 \leq \ell\leq r-1} e^{-2\pi i \ell^{2} \,\frac{q}{r}} = \overline{S(q, r)}
$$
i.e.\ a Gauss sum.
Similarly (obvious notation, with $y \in S^1$), one has 
$$
\delta_{m}^{(r)} (y) := \delta [(y - m/q)\, r] = \sum_{0 \leq m^{\prime}\leq q -1} e^{-2\pi i m^{\prime}m \,\frac{r}{q}} \sum_{n \in  {\mathbb Z}} e^{2\pi i (m^{\prime} + q n) r y} \equiv b^{m^{\prime}}_{m} \theta_{m^{\prime}}^{(r)}
$$
with a corresponding matrix
$$
B :=  \left(b^{m^{\prime}}_{m}\right) = \left(e^{-2\pi i m^{\prime} m \,\frac{r}{q}}\right)
$$
with
$$
\Tr B = \sum_{0 \leq m \leq q -1} e^{-2\pi i m^{2} \,\frac{r}{q}} = \overline{S(r, q)}.
$$
Then consider the tensor product distributions
$$
\delta_{\ell}^{(q)} (x) \delta_{m}^{(r)}(y), \qquad x, y \in S^1.
$$
Upon fixing $m \in \{ 0,1,\dots, q-1\}$, one has an obvious $r$-component \emph{column} vector, representing a model for the $m$-th
Matsushima holomorphic section for the vector bundle ${\mathcal E}_{r,q} \to V/L$. More precisely, we have (with a natural abridged notation), upon suitably reinterpreting Matsushima's construction (\cite{Matsushima}, Section 8, and our previous discussion on the {\it CST} in Section 2.1):
$$
{\vec{\delta}}_{\cdot,m} := (\delta_{0,m}, \delta_{1,m}, \cdots \delta_{r-1,m})^T \leftrightarrow \delta_{0,m}.
$$

The $q$ columns thus obtained yield a basis for a $q$-dimensional Hilbert space ${\mathcal H}^q \cong H^0(V/L,{\mathcal E}_{r,q})$.

Similarly,
fixing $\ell \in \{ 0,1,\dots, r-1\}$, we get a \emph{row} vector, giving rise to a model for the $\ell$-th
holomorphic section of the (FMN$^*$) dual vector bundle ${\mathcal E}_{q,r} \to V/L$,
namely:
$$
{\vec{\delta}}_{\ell,\cdot} := (\delta_{\ell,0}, \delta_{\ell,1}, \cdots \delta_{\ell,q-1}) \leftrightarrow \delta_{\ell,0}.
$$
and the ensuing $r$ rows yield a basis for an $r$-dimensional Hilbert space ${\mathcal H}^r \cong H^0(V/L,{\mathcal E}_{q,r})$.

\par
Also, in view of the previous considerations (Section 2.5), we can naturally establish a bijective correspondence 
$$
\delta_{\ell}^{(q)} (x) \delta_{m}^{(r)}(y) \, \leftrightarrow \, \delta_{k} (z) = \delta (z - k/rq), \qquad z \in S^1
$$
($k \in \{ 0,1,\dots, rq -1\}$), with the $rq$-level thetas, viewed as holomorphic sections of
the complex line bundle ${\mathcal E}_{1,rq} \to V/L^{\prime}$.  

Therefore, one finds a third matrix
$$
C :=  (c_k^{k^{\prime}} = e^{-2\pi i k^{\prime} k \,\frac{1}{rq}})
$$
with 
$$
\Tr C = \sum_{0 \leq k \leq rq -1} e^{-2\pi i k^{2} \,\frac{1}{rq}} = \overline{S(1, rq)}.
$$

The above matrix is related to the former ones in the following way.
 Let us consider the following $rq$-dimensional Hilbert spaces: ${\mathcal H}^{rq}$, generated by the
 orthonormal basis $\delta_{\ell}^{(q)}(x)\delta_{m}^{(r)}(y)$, and ${\mathbb H}^{rq}$, generated by 
 the orthonormal basis $\delta_{k}(z)$; we have then a natural unitary transformation
 ${\mathcal T}: {\mathcal H}^{rq} \rightarrow {\mathbb H}^{rq}$ 
 $$
 {\mathcal T} (\delta_{\ell}^{(q)}\delta_{m}^{(r)}) := \delta_{k}
 $$
whereby
$$
{\mathcal T} (\theta_{\ell}^{(q)}\theta_{m}^{(r)}) = \theta_{k}
$$
 as well (shorthand notation), this easily leading to
$$
C = {\mathcal T} \, (A \otimes B) \,{\mathcal T}^{-1}.
$$

Therefore, from 
$$
 \Tr \,C  = \Tr \, [{\mathcal T}\, (A \otimes B) \,{\mathcal T}^{-1}] =  \Tr\,(A \otimes B) = \Tr \,A \cdot \Tr \,B
$$
we get a special case of the above multiplicative formula for Gauss sums with
$\mu =1$. \par 
Actually, the general formula is also obtained via the same technique, after introducing
from the outset another $\mu$-covering $S^1 \to S^1$, resulting in an extra factor $\mu$ in the numerators of all arguments of the exponentials. \par 
This may be viewed as a sort of \emph{categorification} of Gauss sums in the sense that, as numerical objects, they come from the multiplicativity of tensor product traces.\par
A variant of the above procedure consists in exploiting the (algebra) isomorphism $M_{rq} ({\mathbb C}) \cong 
M_{r} ({\mathbb C}) \otimes M_{q} ({\mathbb C})$ via the elementary matrix bases $E_{ij}$ (i.e.\ the matrices
whose $(i,j)$-entry is $1$ and all others are zero):
$$
E_{kk^{\prime}} \leftrightarrow E_{\ell\ell^{\prime}} \otimes E_{mm^{\prime}}
$$
\noindent
(abridged notation: $k$, $k^{\prime}$ and so on are taken modulo the size of the respective matrices).

\begin{remark}
A few words about the \emph{heuristics} behind the above discussion are maybe in order:
upon \emph{formally} multiplying the deltas labelled by $\ell$ and $m$ after taking the same argument $x=y$ (this is an ill-defined object!),
one has, for the product of their Fourier series, after an obvious index relabelling, the (meaningless) expression
\begin{equation}
  \sum_{n,N \in {\mathbb Z}} e^{2\pi i [\ell q + m r + rq N] x}
\end{equation}
which, upon \emph{discarding} the sum in $n$, yields the distributional Fourier series expressing
 $\delta (z - k/rq)$ ---after changing $x$ to $z$--- with $[k]_{rq}$ obeying the above equation.
\end{remark}

\subsection{Noncommutative torus aspects of the $\delta$-formulation}
Set (abridged notation) $\delta_{\ell m} := \delta_{\ell}\delta_{m}$ and
define, in ${\mathcal H}^{rq}$, for $\mu$, $\nu$, $\widetilde{\mu}$,  $\widetilde{\nu} \in S^1$
$$
{\mathfrak U}\, \delta_{\ell m} := \mu \delta_{\ell, m-1}, \qquad {\mathfrak V} \, \delta_{\ell m} := \nu e^{-2\pi i\,\frac{m}{q}} \delta_{\ell m}
$$
and
$$
{\widetilde {\mathfrak U}} \, \delta_{\ell m} := \widetilde{\mu} \delta_{\ell-1,m}, \qquad {\widetilde {\mathfrak V}} \, \delta_{\ell m} := \widetilde{\nu} e^{-2\pi i\,\frac{\ell}{r}} \delta_{\ell m}
$$
(cyclic ordering understood: for instance, ${\mathfrak U} \,\delta_{\ell,0} := \mu \delta_{\ell,q}$ et cetera).
One has, upon restriction to the spaces indicated, ${\mathfrak U}^q   = \mu^q 1_{{\mathcal H}^q}$,  ${\mathfrak V}^q = \nu^q 1_{{\mathcal H}^q}$, ${\widetilde {\mathfrak U}}^r =  {\widetilde{\mu}}^r 1_{{\mathcal H}^r}$, ${\widetilde {\mathfrak V}}^r = {\widetilde{\nu}}^r 1_{{\mathcal H}^r}$ and subsequently

$$
{\mathfrak U}{\mathfrak V} =  e^{-2\pi i\,\frac{1}{q}} {\mathfrak V}{\mathfrak U}, \qquad \qquad {\widetilde {\mathfrak U}} {\widetilde {\mathfrak V}} = e^{-2\pi i\,\frac{1}{r}}  {\widetilde {\mathfrak V}}{\widetilde {\mathfrak U}}. 
$$
Then define
$$
{\mathbb U} \delta_{\ell m} := \mu \delta_{\ell, m-1}  = {\mathfrak U} \, \delta_{\ell m}, \qquad {\mathbb V}\delta_{\ell m} := \nu^r e^{-2\pi i\,\frac{r}{q}\, m} \delta_{\ell m} = {\mathfrak V}^r \, \delta_{\ell m},
$$
yielding
$$
{\mathbb U} \,{\mathbb V} = e^{-2\pi i\,\frac{r}{q}} \, {\mathbb V} \,  {\mathbb U} 
$$
that is, a representation of ${\mathcal A}_{1/\theta}$, $\theta = q/r$ (cyclic ordering again understood), with ${\mathbb U}^q = \mu^q 1_{{\mathcal H}^q}$, ${\mathbb V}^q =  \nu^{rq} 1_{{\mathcal H}^q}$. Similarly, upon setting
$$
{\widetilde{\mathbb U}} \delta_{\ell m} := {\widetilde{\mu}}\delta_{\ell-1, m}  = {\widetilde {\mathfrak U}} \, \delta_{\ell m}, \qquad {\widetilde{\mathbb V}}\delta_{\ell m} := {\widetilde{\nu}}^q e^{-2\pi i\,\frac{q}{r}\, \ell} \delta_{\ell m} =  {\widetilde {\mathfrak V}}^q \, \delta_{\ell m},
$$
we get
$$
{\widetilde{\mathbb U}} \,{\widetilde{\mathbb V}} = e^{-2\pi i\,\frac{q}{r}} \, {\widetilde{\mathbb V}} \,{\widetilde{\mathbb U}}
$$
(a representation of ${\mathcal A}_{\theta}$), together with ${\widetilde{\mathbb U}}^r =  {\widetilde{\mu}}^r 1_{{\mathcal H}^r}$,  ${\widetilde{\mathbb V}}^r =  {\widetilde{\nu}}^{rq}1_{{\mathcal H}^r}$.
These two representations mutually commute (since they do not mix first and second subscripts)  
and  they are exchanged upon application of the FMN$^*$-transform.\par
\smallskip
\noindent
The action of the various operators involved can be cast in a more compact way as follows: in ${\mathcal H}^q$
one has
$$
{\mathbb U}\, \vec{\delta}_{\cdot,m} = \mu \,\vec{\delta}_{\cdot,m-1} \qquad
{\mathbb V}\, \vec{\delta}_{\cdot,m} = \nu^r e^{-2\pi i \frac{r}{q} m} \,\vec{\delta}_{\cdot,m},
\quad m=0,1,\dots,q-1
$$
with the ``tilded'' operators acting as the identity:
$$
{\widetilde{\mathbb U}}\, \vec{\delta}_{\cdot,m} = \vec{\delta}_{\cdot,m} \qquad
{\widetilde{\mathbb V}}\, \vec{\delta}_{\cdot,m} = \vec{\delta}_{\cdot,m},
\quad m=0,1,\dots,q-1.
$$
A similar portrait, \emph{mutatis mutandis}, holds in ${\mathcal H}^r$. Summarizing, we have
\begin{proposition}
The above operators ${\mathbb U}$, ${\mathbb V}$ realize a representation of ${\mathcal A}_{r/q}$ unitarily equivalent to a representation induced by ${\hat u}$, 
  ${\hat v}$ in Proposition 3.4 above, after restriction of the latter to the space of holomorphic sections of ${\mathcal E}_{r,q}$. An analogous statement is true for the tilded operators and the FMN*- transformed bundle ${\mathcal E}_{q,r}$. 
 \end{proposition}

  \begin{remark}
  Geometrically, the above ``toric'' families of representations correspond to
  tensoring the initial holomorphic bundle $E_{\theta} \to {\mathbb C}/ \Lambda$ with the
  flat line bundle ${\mathcal P}_{\xi} \to {\mathbb C}/ \Lambda$, the restriction of the
  Poincar\'e bundle to ${\mathbb C}/ \Lambda \times \{\xi \} \cong {\mathbb C}/ \Lambda$, where
  $\xi = (\mu, \nu)$ and similarly for $E_{1/\theta} \to {\mathbb C}/ \Lambda$. Also notice that
  the torus also classifies holonomies of the different Chern  connections,
  see also~\cite{Spera2015}.
\end{remark}

We recover the standard Matsushima correspondence involving the holomorphic vector bundle ${\mathcal E}_{r,q} \to V/L$ and the $q$-level theta line bundle ${\mathcal E}_{1,q} \to V/L^{\prime}$ via the Coherent State Transform $CST$ through the following steps (obvious abridged notation), also setting
 $\mu = {\widetilde{\mu}} = {{\nu}} = {\widetilde{\nu}} = 1$  for simplicity. Indeed:
$$
\vartheta_{j,m} = CST (\delta_{j,m}) = CST ({\widetilde U}^j \delta_{0,m}) = CST ({\widetilde{\mathbb U}}^j \delta_{0,m}), \qquad j=0,1,\dots, r-1 
$$
whence
 $$
{\vec{\vartheta}_{\cdot, m}} := (\vartheta_{j,m})_{j=0,\dots r-1}^T =: CST \,(\vec{\delta}_{\cdot,m})   \leftrightarrow 
\vartheta_{0,m}.
$$
Actually, in this manner we have defined a \emph{vector} version of the Coherent State Transform:
$$
{\mathcal {CST}} =  {\mathcal M}^{-1} \circ {\rm vec} \circ CST \circ {\mathcal F}
$$
mapping
$
\delta_{0,m} \mapsto  s_m
$
and spelled out as follows
$$
\delta_{0,m} {}^{{\mathcal F}\atop\mapsto} \theta_{0,m} (x) {}^{{CST}\atop\mapsto} 
\vartheta_{0,m}(z) {}^{{\rm vec}\atop\mapsto}{\vec{\vartheta}}_{\cdot,m}(z) \!\! {}^{{\quad{\mathcal M}^{-1}}\atop\mapsto} s_m.
$$
The notation ${\mathcal M}^{-1}$ is justified since ${\mathcal M}$ is injective and ${\rm Im (vec)} \subset {\rm Im}\,{\mathcal M}$.
We may recap the previous discussion via the isomorphism 
$$
H^0({V/L, \mathcal E}_{r,q})  \otimes H^0({V/L,\mathcal E}_{q,r}) \cong H^0(V/L^{\prime},{\mathcal E}_{1,rq}) 
$$

induced by the correspondence
$$
s_m \otimes s_{\ell}  \leftrightarrow s_k
$$
where again
$$
[k]_{rq} = q \, [l]_{r} + r \, [m]_{q}
$$
and  by further noticing the following ``categorical'' result:

\begin{proposition}
  Under the above assumptions, we have
  \begin{equation}
    \mathcal{A}_{1/rq} \cong \mathcal{A}_{q/r}  \otimes \mathcal{A}_{r/q}.
  \end{equation}
\end{proposition}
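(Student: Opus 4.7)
The plan is to produce an explicit $*$-isomorphism $\Phi\colon \mathcal{A}_{1/rq} \to \mathcal{A}_{q/r} \otimes \mathcal{A}_{r/q}$ built from the B\'ezout identity $ar+bq=1$, available because $\gcd(r,q)=1$. I will work in the finite-dimensional irreducible realizations guaranteed by Proposition 3.3: $\mathcal{A}_{q/r}$ is generated by $u_1, v_1$ acting on $\mathbb{C}^r$ with $u_1^r, v_1^r$ scalar, and similarly $u_2, v_2$ generate $\mathcal{A}_{r/q}$ on $\mathbb{C}^q$ with $u_2^q, v_2^q$ scalar. On the $\mathcal{A}_{1/rq}$-generators $U, V$ I define
\[
  \Phi(U) := u_1^b \otimes u_2^a, \qquad \Phi(V) := v_1^b \otimes v_2^a.
\]

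The key calculation verifies the commutation relation. Commuting $v_1^b$ past $u_1^b$ accumulates a phase $e^{2\pi i b^2 q/r}$, and analogously $v_2^a u_2^a = e^{2\pi i a^2 r/q} u_2^a v_2^a$, whence on the tensor product
\[
  \Phi(V)\Phi(U) = e^{2\pi i (b^2 q^2 + a^2 r^2)/(rq)}\, \Phi(U)\Phi(V).
\]
Squaring $ar+bq=1$ gives $a^2 r^2 + b^2 q^2 + 2abrq = 1$, so $a^2 r^2 + b^2 q^2 \equiv 1 \pmod{rq}$, and the phase collapses to $e^{2\pi i/(rq)}$ --- exactly the $\mathcal{A}_{1/rq}$ defining relation. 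Hence $\Phi$ extends to a well-defined $*$-homomorphism.

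For surjectivity I exploit B\'ezout again. One has $\Phi(U^q) = u_1^{bq} \otimes u_2^{aq}$; the second factor is scalar because $u_2^q$ is central, and the identity $bq = 1 - ar$ gives $u_1^{bq} = u_1 \cdot (u_1^r)^{-a}$, also a scalar multiple of $u_1$. So $\Phi(U^q)$ is a nonzero scalar multiple of $u_1 \otimes 1$, placing $u_1 \otimes 1$ in the image; parallel computations with $U^r, V^q, V^r$ yield $1 \otimes u_2$, $v_1 \otimes 1$, $1 \otimes v_2$, hitting a full generating set of $\mathcal{A}_{q/r} \otimes \mathcal{A}_{r/q}$. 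Injectivity is then automatic: $\mathcal{A}_{1/rq} \cong M_{rq}(\mathbb{C})$ is simple, so any nonzero $*$-homomorphism out of it has trivial kernel, and the dimensions match.

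The main obstacle I expect is tracking the central characters $u_1^r, v_1^r, u_2^q, v_2^q$ coherently: the isomorphism depends on the choice of irreducible representations on both sides, and one must confirm that these characters transport consistently to the central values of $U^{rq}, V^{rq}$ on the $\mathcal{A}_{1/rq}$ side (this is the moduli content of Proposition 3.5 and the Remark of Section 4.2). A conceptually cleaner alternative, more faithful to the paper's ``categorical'' perspective on Gauss sums, is to verify instead that the unitary $\mathcal{U}\colon \mathcal{H}^{rq} \to \mathbb{H}^{rq}$ of Section 4.1 intertwines the tensor representation of $\mathcal{A}_{q/r} \otimes \mathcal{A}_{r/q}$ furnished by $(\widetilde{\mathbb{U}}, \widetilde{\mathbb{V}}, \mathbb{U}, \mathbb{V})$ on $\mathcal{H}^{rq}$ with a natural representation of $\mathcal{A}_{1/rq}$ on $\mathbb{H}^{rq}$; the required intertwining on generators is precisely the same B\'ezout decomposition read at the basis level $\delta_\ell^{(q)} \delta_m^{(r)} \leftrightarrow \delta_{q\ell + rm}$, thereby lifting the categorified Gauss-sum identity $C = \mathcal{U}(A \otimes B)\mathcal{U}^{-1}$ to the asserted algebra isomorphism.
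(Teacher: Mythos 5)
Your construction is essentially the paper's own proof: the authors define $\mathbb{U}^{k}:=U^{m}\widetilde{U}^{\ell}$ for $k=\ell q+mr$, which at $k=1$ is exactly your B\'ezout prescription $\Phi(U)=u_1^{b}\otimes u_2^{a}$, and their verification $\mathbb{U}^{k}\mathbb{V}^{k'}=e^{-2\pi i kk'/rq}\,\mathbb{V}^{k'}\mathbb{U}^{k}$ is the same phase computation you perform by squaring $ar+bq=1$. Your explicit surjectivity/injectivity argument merely fills in what the paper dismisses as ``the above reasoning is clearly invertible,'' so the two proofs coincide in substance.
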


\begin{proof}
  First observe that noncommutative tori are nuclear $C^*$-algebras, so their
  $C^*$-tensor product appearing in the r.h.s.\ is uniquely determined (\cite{BB}, IV.3.5.3, p. 392). Then, starting from
  $$
  u\,v = e^{-2\pi i \frac{r}{q}} \, v\, u
  $$
  and
  $$
  {\widetilde u} \,{\widetilde v} = e^{-2\pi i \frac{q}{r}}\, {\widetilde v} \,{\widetilde u}
  $$
  (all tilded operators commuting with untilded ones),
  define (same notation as before: $k = \ell q + m r$ and so on):
  $$
  {\mathcal U}^k := u^m {\widetilde u}^{\ell}, \qquad {\mathcal V}^{k^{\prime}} := v^{m^{\prime}} {\widetilde v}^{\ell^{\prime}}.
  $$
  A straightforward computation then yields:
  $$
  {\mathcal U}^k\,{\mathcal V}^{k^{\prime}} = e^{-2\pi i \frac{k k^{\prime}}{\!rq}} \, {\mathcal V}^{k^{\prime}}{\mathcal U}^k.
  $$
  The above reasoning is clearly invertible, achieving the sought for conclusion.
\end{proof}

\begin{remark}
  Upon further requiring that $u^q = v^q =1$ and
  ${\widetilde u} ^r = {\widetilde v}^r =1$, we get
  ${\mathcal U}^{rq} = {\mathcal V}^{rq} =1$ (with $1$ the identity in the
  respective algebras).\par
  Also notice that, at the vector bundle level, this reflects an operation
  (denoted by $\star$)
  \begin{equation}
    {\mathcal E}_{r,q} \star {\mathcal E}_{q,r} = {\mathcal E}_{1, rq}.
  \end{equation}
  casting light on FMN-duality via a ``Gauss'' perspective.
\end{remark}

\section{Conclusions and outlook}

In this paper we used complex algebraic-geometric and noncommutative geometric
techniques in order to understand and possibly enhance, at least in the
classical case (i.e.\ on the complex field) the  relationship between
theta functions and Heisenberg groups. Our research is strongly motivated by the
Quantum Hall Effect as well:  the symmetry $\theta \leftrightarrow 1/\theta$ discussed in the paper
may ultimately lead to an explanation of the duality occurring between Hofstadter's and Harper's regimes, see
e.g.~\cite{Denittis}.  Also, our results may help in providing a clear-cut mathematical
formulation of the important \emph{Laughlin gauge principle} for a toral
configuration, see \ e.g.\ the comprehensive review~\cite{Hatsugai}. Finally,
instances of the vector bundles dealt with in the present paper also appear in
the works~\cite{DN-L, Denittis}, devoted to a far reaching generalization of the
TKNN equations. These questions will be possibly tackled elsewhere.

\subsubsection*{Acknowledgements}

We are grateful to G. De Nittis for useful discussions and to the Referees for their careful reading and critical remarks.\par
\noindent
M. Sandoval's research is supported by the grant
\texttt{CONICYT-PFCHA Doctorado Nacional 2018--21181868}.\par
\noindent
M. Spera (member of INDAM-GNSAGA) is supported by local (D1) UCSC funds.

\end{document}